\documentclass{sinst}

\usepackage{amsbsy,amsfonts,amsmath,amssymb,amsthm}
\usepackage{array}
\usepackage{bm}
\usepackage{color}
\usepackage{enumerate}
\usepackage{mathrsfs}
\usepackage{latexsym}
\usepackage[colorlinks=true]{hyperref}
\usepackage{mathtools}
\mathtoolsset{showonlyrefs, showmanualtags}
\hypersetup{urlcolor=blue, citecolor=blue, linkcolor=blue}

\definecolor{wineRed}{rgb}{0.7,0,0.3}
\definecolor{grandBleu}{rgb}{0,0,0.8}
\definecolor{darkGreen}{rgb}{0,0.4,0}
\definecolor{blueViolet}{rgb}{0.4,0,1.0}
\definecolor{bloodOrange}{rgb}{0.85,0.05,0}
\definecolor{mycolor}{rgb}{0.8,0,0.2}

\usepackage{comment}

\DeclareMathAlphabet{\mathpzc}{OT1}{pzc}{m}{it}

\numberwithin{equation}{section}

\theoremstyle{plain}
 
\newtheorem{lemma}{Lemma}[section]

\newtheorem{theorem}{Theorem}

\theoremstyle{definition}
\newtheorem{definition}{Definition}
\newtheorem{remark}{Remark}

\newtheorem{example}{Example}

\def\N{\mathbb{N}}
\def\R{\mathbb{R}}

\DeclareMathOperator{\diver}{div}

\DeclareMathOperator*{\olim}{\overline{\lim}}
\DeclareMathOperator*{\ulim}{\underline{\lim}}

\begin{document}
\label{page:t}
\thispagestyle{plain}

\title{
Energy Dissipative Solution to a Nonlinear Parabolic Systems with Unknown Dependent Coefficients
}
\author{{\sc Naotaka Ukai}
}
\affiliation{ Division of Mathematics and Informatics, \\Graduate School of Science and Engineering, Chiba University, \\1-33, Yayoi-cho, Inage-ku, 263-8522, Chiba, Japan}
\email{24wd0101@student.gs.chiba-u.jp}
\footcomment{
\\
}
\maketitle

\noindent
{\bf Abstract.}
In this paper, we investigate a system of parabolic partial differential equations with unknown-dependent coefficients that integrates two models: an anisotropic orientation-adaptive denoising process in image processing and a phase-field model of grain-boundary motion in materials science.
In recent years, several studies have attempted to develop a unified framework for treating these two research areas by considering pseudo-parabolic systems obtained through the introduction of the energy-dissipation operator $ - \Delta \partial_t $. However, the mathematical models for image processing and grain-boundary motion are originally formulated as parabolic systems. Therefore, establishing a unified analytical framework for such parabolic models remains an open problem.
In this paper, we address this open problem by introducing a notion of solution that reproduces energy dissipation in parabolic systems, which we call an energy dissipative solution. As the main result, we clarify conditions that guarantee the existence of such solutions.
The results of this paper establish a unified analytical framework for parabolic models, which has remained unresolved, and provide a solid theoretical foundation for advanced problems spanning both image processing and materials science.

\section{Introduction}
Let $T>0$ be a fixed positive constant and let $M,N\in\mathbb{N}$. Let $\Omega\subset\mathbb{R}^N$ be a bounded domain with Lipschitz boundary $\Gamma:=\partial\Omega$ and unit outer normal $\bm n_\Gamma$. We set $Q:=(0,T)\times\Omega$ and $\Sigma:=(0,T)\times\Gamma$. 

In this paper, we consider the following system of pseudo-parabolic PDEs, denoted by $(\mathrm{S})$:\vspace{-1ex}
\begin{align*}
    \begin{cases}
        A(\bm{u})\partial_t\bm{u}-\mathrm{div}\bigl(\alpha(\bm{u}) B^*(\bm{u}) \partial \gamma({B}(\bm{u})\nabla\bm{u})+\kappa \nabla \bm{u}\bigr)+\nabla_{\bm{u}} G(x, \bm{u})
        \\
        \qquad+[\nabla\alpha](\bm{u})\gamma({B}(\bm{u})\nabla\bm{u})+\alpha(\bm{u})\partial \gamma({B}(\bm{u})\nabla\bm{u}):[\nabla{B}](\bm{u})\nabla\bm{u}\ni \bm{0} \mbox{ in $ Q $,}
        \\[0.5ex]
        \bigl(\alpha(\bm{u})B^*(\bm{u}) \partial \gamma({B}(\bm{u})\nabla\bm{u})+\kappa \nabla \bm{u} \bigr) \bm{n}_\Gamma\ni \bm{0} \mbox{ on } \Sigma,
        \\[0.5ex]
        \bm{u}(0,x)=\bm{u}_0(x), ~x\in\Omega.
    \end{cases}
\end{align*}\vspace{-1.75ex}

This system is a gradient flow of the following energy functional:\vspace{-1.5ex}
\begin{gather}
    \mathscr{E}: \bm{u} = {}^\top \bigl[ u_1, \dots, u_M \bigr] \in [H^{1}(\Omega)]^M \mapsto \mathscr{E}(\bm{u}) := \frac{\kappa}{2}\int_\Omega|\nabla \bm{u}|^2\,dx
  \nonumber
\\
  +\int_\Omega \alpha(\bm{u})\gamma(B(\bm{u})\nabla \bm{u}) \, dx+\int_\Omega G(x,\bm{u}) \, dx \in [0, \infty).
  \label{energy01}
\end{gather}\vspace{-2.75ex}

The functional $\mathscr{E}=\mathscr{E}(\bm{u})$ generalizes the free energies proposed in \cite{berkels2006cartoon,MR1752970}. In particular, system (S) includes as applications the mathematical models proposed in the following two research areas:
\begin{itemize}
    \item Anisotropic image processing models with automatic orientation adaptation mechanisms (cf. \cite{berkels2006cartoon}), where $\gamma$ is a real convex function characterizing image anisotropy, while $B(\bm{u})$ is an unknown-dependent operator (rotation) responsible for orientation adjustment.
    \item Mathematical models of grain boundary motion (cf. \cite{MR1752970}), where $A(\bm{u})$ and $\alpha(\bm{u})$ represent the unknown-dependent mobility of grain boundaries.
\end{itemize}

\noindent
Thus, analyzing the gradient system (S) establishes a unified framework linking image processing and materials science. Moreover, in both fields, incorporating the gradient $\nabla_{\bm{u}} G(x,\bm{u})$ of a smooth potential $G(x,\bm{u})$ allows for more realistic phenomena.

Several attempts have been made to construct a unified framework for the pseudo-parabolic gradient system associated with $\mathscr{E}$, obtained by adding the energy-dissipation term $-\Delta \partial_t \bm{u}$ to (S) (cf.~\cite{arAMSU2025}). However, models for image processing and grain-boundary motion are originally formulated as parabolic gradient flows (cf.~\cite{MR1752970,Shirakawa1818eng}). Thus, a unified analytical framework for these parabolic models remains open.

In this paper, we resolve this open problem by analyzing system (S), thereby completing the unification of the framework. Accordingly, we introduce a notion of solution that accounts for energy dissipation, which we refer to as an energy-dissipative solution. The main result concerns the existence of such solutions and is presented in Theorem \ref{mainThm1}.

The results of this paper establish a unified analytical framework for parabolic models. This framework bridges image processing and materials science and provides a solid theoretical foundation for the study of advanced problems arising in both fields.

\section{Preliminaries}\label{secpre}\vspace{-1ex}
We begin by specifying the notation adopted throughout the paper.

\noindent
\underline{\it\textbf{Real analysis.}} We define $a \wedge b : = \min\{ a , b \} , \mbox{ for all } a , b \in [-\infty, \infty]$. Let $d \in \N$ be fixed dimension. We denote by $ | x | $ and $ x \cdot y $ the Euclidean norm of $ x \in \R^d$ and the standard scalar product of $ x , y \in \R^d$, respectively, i.e.,
$
  | x | : = \sqrt{x_1^2 + \cdots + x_d^2} \quad \mbox{and} \quad x \cdot y := x_1 y_1 + \cdots + x_d y_d,\mbox{ for all } x , \, y\in \R^d.
$
Additionally, we note the following elementary fact:\vspace{-1ex}
\begin{description}
  \item[\textbf{(Fact 1)}]Let $ m \in \N $ be fixed. If $ \{A_k\}_{k=1}^m \subset \R $ and $ \{ a_n^k \}_{n\in\N}\subset\R $, 
  $ k = 1, \dots , m $ satisfies that:\vspace{-1ex}
  \[ \ulim_{n \to \infty} a^k_n \geq A_k, \mbox{ for } k = 1, \dots , m, \mbox{ and }\olim_{n \to \infty} \sum_{k=1}^{m} a_n^k \leq \sum_{k=1}^{m} A_k.\vspace{-1.5ex} \]
  Then, $ \lim_{n \rightarrow \infty } a_n^k = A_k $, for $ k = 1 , \dots , m $. \vspace{-1ex}
\end{description}
\underline{\it\textbf{Abstract functional analysis.}} For an abstract Banach space $ X $, we denote by $| \cdot |_X$ the norm of $ X $, and denote by $ \langle \cdot , \cdot \rangle_X $ the duality pairing between $ X $ and its dual $ X^* $. In particular, when $ X $ is a Hilbert space, we denote by $( \cdot , \cdot )_X$ its inner product. For Banach spaces $ X_1 , \dots ,X_d $ with $ 1 < d \in \N$, let $ X_1 \times \dots \times X_d $ be the product Banach space which has the norm \vspace{-1.5ex}
\[ | \cdot |_{ X_1\times \dots \times X_d } : = \left(| \cdot |_{X_1}^2 + \dots + | \cdot |_{X_d}^2\right)^\frac{1}{2} .\vspace{-1ex}\] 
In addition, in the case where the domain is the whole space we shall denote the norm in the following:\vspace{-1ex}
\[
  \|\cdot\|_{W^{p.q}}:=|\cdot|_{W^{p.q}(\R^d;\R^k)}, \mbox{ for all $ d,k\in\N$, $ p \in \N\cup \{0\}, $ and $q \in[1,\infty]$.}\vspace{-1ex}
\]
Furthermore, for normed spaces $E$ and $F$, we denote by $\mathcal{L}(E,F)$ the space of all continuous linear maps from $E$ into $F$. 
\vspace{0.5ex}

\noindent
\underline{\it\textbf{Convex analysis.}} For any proper lower semi-continuous (l.s.c.) and convex function $ \Psi : X \rightarrow (-\infty,\infty]$ on a Hilbert space $ X $, we denote by $ D( \Psi ) $ the effective domain of $ \Psi $, and denote by $ \partial \Psi $ the subdifferential of $ \Psi $. More precisely, for each $ w \in X $, the value $ \partial \Psi( w ) $ is defined as the set of all elements $ w^* \in X $ that satisfy the variational inequality\vspace{-0.75ex}
\[( w^*, x - w )_X \leq \Psi (x) - \Psi (w), \mbox{ for any } x \in D ( \Psi ),\vspace{-0.75ex} \]
and the set $ D ( \partial \Psi ) := \{ x \in X \,|\, \partial \Psi (x) \neq \emptyset \}$ is called the domain of $ \partial \Psi $. We often use the notation $`` [w, w^*] \in \partial \Psi $ in $ X \times X "$ to mean that $`` w^* \in \partial \Psi ( w ) $ in $ X $ for $ w \in D ( \partial \Psi ) "$, by identifying the operator $ \partial \Psi $ with its graph in $ X \times X $.\vspace{-1ex}
\begin{example}\label{ex1}
  Let $\gamma:\R^{M\times N}\to[0,+\infty)$ be a convex function in $C^{0,1}(\R^{M\times N})$. For $\varepsilon\ge0$, define ${\gamma_\varepsilon}$ by:\vspace{-1ex}
  \begin{equation*}
    \gamma_\varepsilon :=\left\{
      \begin{aligned}
        &\gamma, &&\mbox{ if }\varepsilon=0,
        \\
        &\rho_\varepsilon * \gamma, &&\mbox{ otherwise},
      \end{aligned}\right. \mbox{ on }\R^{M\times N},\vspace{-1ex}
  \end{equation*}
  where $\rho_\varepsilon$ is the standard mollifier. Then, the following two items hold.
  \begin{description}
    \item[(I)] Let $\{\Phi_\varepsilon\}_{\varepsilon\geq0}$ be a sequence of functionals on $[L^2(\Omega)]^{M\times N}$, defined as:\vspace{-1ex}
  \[
    \Phi_\varepsilon:W\in[L^2(\Omega)]^{M\times N}\mapsto\Phi_\varepsilon(W):=\int_\Omega\gamma_\varepsilon(W)\,dx\in[0,\infty).\vspace{-1ex}
  \]
  Then, for every $\varepsilon\in[0,\infty)$, $\Phi_\varepsilon$ is the proper, l.s.c., and convex function, 
  $
  D(\Phi_\varepsilon)=D(\partial\Phi_\varepsilon)=[L^2(\Omega)]^{M\times N}, \mbox{ and } \mbox{ for any }W\in[L^2(\Omega)]^{M\times N},
  $ 
  \begin{equation*}
    \partial\Phi_\varepsilon(W):=
    \left\{
      \begin{aligned}
        &\{\nabla\gamma_\varepsilon(W)\}, \mbox{ if }\varepsilon>0,
        \\
        &\{W^*\in[L^2(\Omega)]^{M\times N}~|~W^*\in\partial\gamma(W)\mbox{ a.e. in } \Omega,\}, \mbox{ if }\varepsilon=0,
      \end{aligned}
    \right.
  \end{equation*}\vspace{-3ex}
    \item[(II)] Let any open interval $I\subset(0,T)$, and let $\{\widehat{\Phi}_\varepsilon^I\}_{\varepsilon\geq0}$ be a sequence of functionals on $L^2(I;[L^2(\Omega)]^{M\times N})$, defined as:
  \[
  \widehat{\Phi}_\varepsilon^I:W\in L^2(I;[L^2(\Omega)]^{M\times N})\mapsto\widehat{\Phi}_\varepsilon^I(W):=\int_I\Phi_\varepsilon(W(t))\,dt\in[0,\infty).\vspace{-1ex}
  \]
  Then, any $\varepsilon\in[0,\infty)$, $\widehat{\Phi}^I_\varepsilon$ is the proper, l.s.c., and convex function, 
  $
  D(\widehat{\Phi}_\varepsilon^I)=D(\partial\widehat{\Phi}_\varepsilon^I)=L^2(I;[L^2(\Omega)]^{M\times N}),\mbox{ and }\mbox{ for any }W\in D(\widehat{\Phi}_\varepsilon^I),
  $\vspace{-1ex}
  \begin{align*}
    &\partial\widehat{\Phi}_\varepsilon^I(W)
    \\
    &=\{\tilde{W}^*\in D(\widehat{\Phi}_\varepsilon^I)\,|\,\tilde{W}^*(t)\in \partial \Phi_\varepsilon(W(t))\mbox{ in }[L^2(\Omega)]^{M\times N}, \mbox{ a.e. }t\in I\}
    \\
    &=
    \left\{
      \begin{aligned}
        &\{\nabla\gamma_\varepsilon(W)\}, \mbox{ if }\varepsilon>0,
        \\
        &\{W^*\in D(\widehat{\Phi}_\varepsilon^I)\,|\,W^*\in\partial\gamma(W)\mbox{ a.e. in } I\times \Omega\}, \mbox{ if }\varepsilon=0,
      \end{aligned}
    \right.
  \end{align*}
  \end{description}
\end{example}\vspace{-2.5ex}

\begin{definition}[{Mosco-convergence}: cf.\cite{MR0298508}]\label{dfnmosco}
    Let $ X $ be a Hilbert space. Let $ \Psi : X \rightarrow ( -\infty , \infty ] $ be a proper, l.s.c., and convex function, and let $ \{ \Psi_n \}_{ n \in \N } $ be a sequence of proper, l.s.c., and convex functions $ \Psi_n : X \rightarrow ( -\infty , \infty ] $, $ n \in \N $. Then, we say that $ \Psi_n \to  \Psi $ on $ X $ in the sense of Mosco, iff. the following two conditions are fulfilled:\vspace{-1ex}
  \begin{description}
    \item[(M1) (Optimality)] For any $w_0 \in D ( \Psi )$, there exists a sequence $ \{w_n\}_{ n \in \N }$ $\subset X $ such that $ w_n \rightarrow w_0 $ in $ X $ and $ \Psi_n ( w_n ) \rightarrow \Psi ( w_0 ) $ as $ n \rightarrow \infty $, \vspace{-1ex}
    \item[(M2) (Lower-bound)] $\ulim_{n \to \infty} \Psi_n ( w_n ) \geq \Psi ( w_0 )$ if $w_0 \in X, \{ w_n \}_{ n \in \N } \subset X $, and $w_n \rightarrow w_0 $ weakly in $ X $ as $ n \rightarrow \infty $.
  \end{description}
\end{definition}\vspace{-2ex}
\begin{definition}[{$ \Gamma $-convergence}; cf.\cite{MR1201152}]\label{dfngamma}
  Let $ X $ be a Hilbert space. Let $ \Psi : X \rightarrow ( -\infty , \infty ] $ be a proper and l.s.c. function, and let $ \{ \Psi_n \}_{ n \in \N } $ be a sequence of proper and l.s.c. functions $ \Psi_n : X \rightarrow ( -\infty , \infty ] $, $ n \in \N $. Then, we say that {$ \Psi_n \to \Psi $} on $ X $ in the sense of $ \Gamma $-convergence, iff. the following two conditions are fulfilled:\vspace{-1ex}
  \begin{description}
      \item[{($\mathbf{\Gamma}$1) (Optimality)}] For any $w_0 \in D ( \Psi )$, there exists a sequence $ \{w_n\}_{ n \in \N }$ $\subset X $ such that $ w_n \rightarrow w_0 $ in $ X $ and $ \Psi_n ( w_n ) \rightarrow \Psi ( w_0 ) $ as $ n \rightarrow \infty $, \vspace{-1ex}
      \item[{($\mathbf{\Gamma}$2) (Lower-bound)}] $\ulim_{n \to \infty} \Psi_n ( w_n ) \geq \Psi ( w_0 )$ if $w_0 \in X, \{ w_n \}_{ n \in \N } \subset X $, and $w_n \rightarrow w_0 $ in $ X $ as $ n \rightarrow \infty $.
  \end{description}
\end{definition}
\vspace{-2.5ex}
\begin{remark}\label{rem2}
  Under convexity, Mosco convergence implies $\Gamma$-convergence; thus, $\Gamma$-convergence can be viewed as a weaker notion of Mosco convergence. Moreover, for convex functions, the following holds:\vspace{-1ex}
\begin{description}
  \item[(Fact 2)] (cf.\cite[Theorem 3.66]{MR0773850} and \cite[Chapter 2]{Kenmochi81}) Let $X$ be a Hilbert space. Let $\Psi, \Psi_n : X \to (-\infty,\infty]$, $n\in\mathbb{N}$, be proper, l.s.c., and convex functions such that $\Psi_n \to \Psi$ on $X$ in the sense of $\Gamma$-convergence as $n \to \infty$. Let us assume that 
  \begin{equation*}
    \left\{
    \begin{aligned}
      &[z, z^*] \in X \times X ,~ [z_n, z_n^*] \in \partial \Psi_n \mbox{ in } X \times X,~n \in \N,
      \\
      &z_n \rightarrow z^* \mbox{ in } X \mbox{ and } z_n^* \rightarrow z^* \mbox{ weakly in } X \mbox{ as } n \rightarrow \infty.  
    \end{aligned}
    \right.\vspace{-1ex}
  \end{equation*}
  Then, it holds that:\vspace{-1ex}
  \[ [ z , z^* ] \in \partial \Psi \mbox{ in } X \times X , \mbox{ and } \Psi_n ( z_n ) \rightarrow \Psi ( z ) \mbox{ as } n \rightarrow \infty.\vspace{-1.5ex} \]
  \item[(Fact 3)](cf.\cite[Lemma 4.1]{MR3661429} and \cite[Appendix]{MR2096945}) Let $ X $ be a Hilbert space, $ d \in \N $ be dimension constant, and $ A \subset \R^d $ be a bounded open set. Let $ \Psi : X \rightarrow ( -\infty , \infty ] $ and $ \Psi_n : X \rightarrow ( -\infty , \infty ] $, $ n \in \N $, be proper, l.s.c., and convex functions on $ X $ such that $ \Psi_n \rightarrow \Psi $ on $ X $, in the sense of $ \Gamma $-convergence, as $ n \rightarrow \infty $. Then, a sequence $ \{ \widehat{ \Psi }_n^A \}_{ n \in \N } $ of proper, l.s.c., and convex functions on $ L^2 ( A ; X ) $, defined as:\vspace{-1ex}
  \begin{equation*}
    z \in L^2 ( A ; X ) \mapsto \widehat{ \Psi }^A_n ( z ) : = \left\{
      \begin{aligned}
        & \int_A \Psi _n ( z ( t ) ) \,dt, \mbox{ if } \Psi_n ( z ) \in L^1 ( A ), 
        \\
        & \infty, \mbox{ otherwise}, 
      \end{aligned}
    \right.\vspace{-1.5ex}
  \end{equation*}
  $\mbox{ for }n \in \N$, converges to a proper, l.s.c., and convex function $ \widehat{ \Psi }^A $ on $ L^2 ( A ; X ) $, defined as:\vspace{-1.5ex}
  \begin{equation*}
    z \in L^2 ( A ; X ) \mapsto \widehat{ \Psi }^A ( z ) : = \left\{
      \begin{aligned}
        & \int_A \Psi ( z ( t ) ) \,dt, \mbox{ if } \Psi ( z ) \in L^1 ( A ), 
        \\
        & \infty, \mbox{ otherwise}, 
      \end{aligned}
    \right.\vspace{-1.5ex}
  \end{equation*}
  on $ L^2 ( A ; X ) $, in the sense of $ \Gamma $-convergence, as $ n \rightarrow \infty $.
\end{description}
\end{remark}
\begin{example}\label{ex2}
  Let $\varepsilon_0\geq0$ be arbitrary fixed constant, and let $\gamma$ and $\{\gamma_\varepsilon\}_{\varepsilon\geq0}$ be as in Example \ref{ex1}, respectively. Then, the following three items hold.\vspace{-1ex}
  \begin{description}
    \item[(I)] $\gamma_\varepsilon\rightarrow\gamma_{\varepsilon_0}$ on $\R^{M\times N}$, in the sense of Mosco, as $\varepsilon\rightarrow\varepsilon_0$.\vspace{-1ex}
    \item[(II)] Let $\{\Phi_\varepsilon\}_{\varepsilon\geq0}$ be the sequence of proper, l.s.c., and convex functions on $[L^2(\Omega)]^{M\times N}$, as in Example \ref{ex1}(I). Then,\vspace{-1ex}
    \[
    \Phi_\varepsilon\rightarrow\Phi_{\varepsilon_0} \mbox{ on }[L^2(\Omega)]^{M\times N} \mbox{ in the sense of Mosco, as }\varepsilon\rightarrow\varepsilon_0.\vspace{-2ex}
    \] 
    \item[(III)] Let $I\subset(0,T)$ be an open interval, and let $\{\widehat{\Phi}_{\varepsilon}^I\}_{\varepsilon\geq0}$ be the sequence of proper, l.s.c., and convex functions on $L^2(I;[L^2(\Omega)]^{M\times N})$, as in Example \ref{ex1}(II). Then, \vspace{-1ex}
    \[
    \widehat{\Phi}^I_\varepsilon\rightarrow\widehat{\Phi}^I_{\varepsilon_0} \mbox{ on }L^2(I;[L^2(\Omega)]^{M\times N}), \mbox{ in the sense of Mosco, as }\varepsilon\rightarrow\varepsilon_0.\vspace{-1ex}
  \]
  \end{description}
\end{example}
\noindent
\underline{\it\textbf{Notations for the time-discretization}}\label{deftseq}
  Let $\tau>0$ be a time step size and define $t_i=i\tau$ for $i=0,1,2,\dots$. Let $X$ be a Banach space. Given ${(t_i,u_i)}{i=0}^\infty \subset [0,\infty)\times X$, we define the piecewise interpolations $[\overline{u}]\tau$, $[\underline{u}]\tau \in L^\infty_{\mathrm{loc}}([0,\infty);X)$ and $[u]\tau \in W^{1,2}_{\mathrm{loc}}([0,\infty);X)$ as follows:\vspace{-0.5ex}
\begin{equation*}
  \left\{
  \begin{aligned}
    &[\overline{u}]_\tau(t):=\chi_{(-\infty,0]}u_{0}+\sum_{i=1}^{\infty}\chi_{(t_{i-1},t_i]}(t)u_{i},
    \\[-0.5ex]
    &[\underline{u}]_\tau(t):=\sum_{i=0}^{\infty}\chi_{(t_i,t_{i+1}]}(t)u_{i},\hspace*{25ex}\mbox{in }X,\mbox{ for any } t\geq0,
    \\[-0.5ex]
    &[u]_\tau(t):=\sum_{i=1}^{\infty}\chi_{(t_{i-1},t_i]}(t)\biggl(\frac{t-t_{i-1}}{\tau}u_{i}+
    \frac{t_i-t}{\tau}u_{i-1}\biggr),
  \end{aligned}
  \right.
\end{equation*}
where $\chi_E:\mathbb{R}\to\{0,1\}$ denotes the characteristic function of a set $E\subset\mathbb{R}$.
\vspace{0.5ex}

\noindent
\underline{\it\textbf{Notations of basic differential operators.}}\label{tensordef}
  Let $A = [a_{ij}]$ and $B = [b_{ij}]$ be matrices in $\mathbb{R}^{M \times N}$, 
with entries $a_{ij}, b_{ij} \in \mathbb{R}$ for $i = 1,\ldots,M$ and $j = 1,\ldots,N$. 
We define the Frobenius inner product and norm by
$
A : B = \sum_{i=1}^M \sum_{j=1}^N a_{ij} b_{ij}\mbox{ and }\|A\| = \sqrt{A : A}, \mbox{ respectively.}
$

For $M > 1$, and a vector-valued function $\bm{z} = [z_i] \in [L^1_{\mathrm{loc}}(\Omega)]^M$. 
Its distributional gradient is given by
$ \nabla \bm{z} = {}^\top(\nabla z_1, \ldots, \nabla z_M) 
\in \mathcal{D}'(\Omega)^{M \times N} $. 
We also denoted by $\Delta \bm{z} := [\Delta z_i] \in [\mathscr{D}'(\Omega)]^M$ as the usual Laplace operator, in the distributional sense. In particular, let $\Delta_N$ denote the Laplacian with homogeneous Neumann boundary condition, defined as 
$
D(\Delta_N) := 
\left\{ \tilde{\bm{z}} \in [H^2(\Omega)]^M \;\middle|\; 
\nabla \tilde{\bm{z}}|_\Gamma \cdot n_\Gamma = 0 
\text{ in } [H^{1/2}(\Gamma)]^m \right\}
$, and for $\bm{z} \in D(\Delta_N)$ we set
$
\Delta_N \bm{z} := [\Delta_N z_i] = \Delta \bm{z} \in [L^2(\Omega)]^M.
$ The operator $-\Delta_N$ is identified as a linear isomorphism via the Green-type formula (cf. \cite[Propositon 5.6.2]{MR2192832}):\vspace{-1ex}
\[
- \int_\Omega \Delta_N \bm{z} \cdot \bm{w} \, dx 
= \int_\Omega \nabla \bm{z} : \nabla \bm{w} \, dx, 
 \mbox{ for any} (\bm{z},\bm{w}) \in D(\Delta_N) \times [L^2(\Omega)]^M.\vspace{-1ex}
\]

Finally, for a matrix-valued function $\bm{Z} = [z_{ik}] \in [L^2(\Omega)]^{M \times N}$, 
we define its distributional divergence by $
\diver \bm{Z} := 
\sum_{k=1}^N \partial_k z_{ik}
\in \mathcal{D}'(\Omega)^M.
$\vspace{-2ex}
\section{Main results}\label{sec:main}\vspace{-1ex}
The Main Theorems are discussed under the following assumptions.
\begin{description}
  \item[(A0)]$ \kappa > 0 $ is a fixed constant. Also, we let $H:=L^2(\Omega)$, $V:=H^1(\Omega)$, $\mathscr{H}:=L^2(0,T;H)$, $\mathscr{V}:=L^2(0,T;V)$.\vspace{-2ex}
  \item[(A1)] $\alpha : \R^M \rightarrow [0,\infty)$ is belongs to $\alpha \in W^{2,\infty}(\R^M) \cap C^2(\R^M)$.\vspace{-1.5ex}
  \item[(A2)] $G : (x, \bm{u}) \in \Omega \times \R^M \rightarrow G(x, \bm{u}) \in [0,\infty)$ is a fixed function, such that $ G(\cdot, \bm{u}) \in L^1(\Omega) $ for any $ \bm{u} \in \R^M $, $ G(x, \cdot) \in C^{1, 1}(\R^M) $ for a.e. $ x \in \Omega $, and furthermore, the gradient $ \nabla_{\bm{u}}G = {}^\top\bigl[ \partial_{u_i} G \bigr]_{1 \leq i \leq M} $ and Hessian $ \nabla_{\bm{u}}^2 G = \bigl[ \partial_{u_i} \partial_{u_j} G \bigr]_{1 \leq i, j \leq M} $ for variable $ \bm{u} = {}^\top[u_1, \dots, u_M] \in \R^M $ are bounded over $ \Omega \times \R^M $.\vspace{-2ex}
  \item[(A3)] $B: \bm{u} \in \R^M \mapsto B(\bm{u}) W := B_0(\bm{u})W B_1(\bm{u}) \in \R^{M \times N}$ is a linear operator defined by using $ B_0 \in [W^{2,\infty}(\R^M) \cap C^2(\R^M)]^{M \times M}$ and $B_1 \in [W^{2,\infty}(\R^M) \cap C^2(\R^M)]^{N \times N}$. Also, $[\nabla B]:\bm{u}\in\R^M \mapsto [\nabla B](\bm{u})W := [\nabla B_0](\bm{u})W B_1(\bm{u}) + B_0(\bm{u})W [\nabla B_1](\bm{u}) \in (\R^{M \times N})^M$ is the differential of $ B $, for $W,Z\in \R^{M\times N} $,
\vspace{-1ex}
\begin{align*}
  &[\nabla B_0](\bm{v})WB_1(\bm{v}):=\left[[\partial_{v_i}B_0](\bm{v})WB_1(\bm{v})\right]_{1\leq i\leq M}\in (\R^{M\times N})^M,
  \\
  &B_0(\bm{v})W[\nabla B_1](\bm{v}):=\left[B_0(\bm{v})W[\partial_{v_i}B_1](\bm{v})\right]_{1\leq i\leq M}\in (\R^{M\times N})^M,
  \\
  &Z:[\nabla B_0](\bm{v})WB_1(\bm{v}):=\left[Z:[\partial_{v_i}B_0](\bm{v})WB_1(\bm{v})\right]_{1\leq i\leq M}\in\R^M,
  \\
  &Z:B_0(\bm{v})W[\nabla B_1](\bm{v}):=\left[Z:B_0(\bm{v})W[\partial_{v_i}B_1](\bm{v})\right]_{1\leq i\leq M}\in\R^M.
\end{align*}\vspace{-4.5ex}

        Moreover, $ B^*: \bm{u} \in \R^M \mapsto B^*(\bm{u}) W := \,{}^\top B_0(\bm{u}) W \,{}^\top B_1(\bm{u}) \in \R^{M \times N}$.\vspace{-1ex}
  \item[(A4)] $A\in W^{1,\infty}(\R^M;\R^{M \times M})$ is a given function, which has a positive constant $ C_A > 0 $ satisfying $^\top \bm{w}A(\bm{v})\bm{w}\geq C_A|\bm{w}|^2, \mbox{ for all }\bm{v},\bm{w}\in\R^M. $\vspace{-1ex}
  \item[(A5)] $0 \leq \gamma \in C^{0, 1}(\R^{M \times N})$ is a convex function satisfying that there exists a constant $C_\gamma>0$ such that $\gamma(W)\leq C_\gamma(|W|+1),~\mbox{ for all } W\in\R^{M\times N}.$\vspace{-1.5ex}
  \item[(A6)] $\bm{u}_0$ is a fixed initial data such that $\bm{u}_0\in [V]^M$.
\end{description}
\begin{remark}\label{Rem.(A2)}
  Note that assumption (A2) yields the following condition.
    \begin{description}
      \item[(A2)$'$]
        The function $ x \in \Omega \mapsto  G(x, \bm{0}) $ belongs to $ L^1(\Omega) $, and there exists a constant $ L_G $, independent of variables $ x \in \Omega $ and $ \bm{u} \in \R^M $, such that
          \begin{gather*}
            \bigl|G(x, \bm{u}) -G(x, \bm{\tilde{u}})\bigr| +\bigl|\nabla_{\bm{u}}G(x, \bm{u}) -\nabla_{\bm{u}}G(x, \bm{\tilde{u}})\bigr| \leq L_G |\bm{u} -\bm{\tilde{u}}|, \\
            \mbox{for a.e. $ x\in \Omega $, and all $ \bm{u}, \bm{\tilde{u}} \in \R^M $.}
          \end{gather*}
    \end{description}
\end{remark}\vspace{-1ex}
Next, let us give the definition of the solution to the system (S), which is called an energy dissipative solution.
\begin{definition}
  A function $\bm{u} \in [\mathscr{H}]^M$ is called a energy dissipative solution to the system (S) if and only if $\bm{u}$ fulfills the following conditions:\vspace{-1ex}
  \begin{description}
      \item[(S0)] $\bm{u}\in W^{1,2}(0,T;[H]^M)\cap L^\infty(0,T; [V]^M)$, $ \bm{u}(0) = \bm{u}_0 $ in $ [H]^M $.\vspace{-1ex}
    \item[(S1)] There exists a function $\bm{w}^*\in L^2(0,T;[H]^{M\times N})$ such that:\vspace{-1ex}
      \begin{align*}
        \bm{w}^*(t,x)\in\partial\gamma(B(\bm{u}(t,x))\nabla \bm{u}(t,x)) \mbox{ in } \R^{M\times N}, \mbox{ for a.e. }(t,x) \in Q,
      \end{align*}
      and $\bm{u}$ solves the following variational inequality: 
    \begin{gather*}
    (A(\bm{u}(t))\partial_t\bm{u}(t),\bm{u}(t)-\bm{\varphi})_{[H]^M}+\kappa(\nabla\bm{u}(t),\nabla(\bm{u}(t)-\bm{\varphi}))_{[H]^{M\times N}}
    \\
    +(\nabla_{\bm{u}} G(x,\bm{u}(t))+[\nabla\alpha](\bm{u}(t))\gamma({B}(\bm{u}(t))\nabla\bm{u}(t)),\bm{u}(t)-\bm{\varphi})_{[H]^M}
    \\
    +(\alpha(\bm{u}(t))\bm{w}^*(t):[\nabla{B}](\bm{u}(t))\nabla\bm{u}(t),\bm{u}(t)-\bm{\varphi})_{[H]^M}
    \\
    +\int_{\Omega}\alpha(\bm{u}(t))\gamma(B(\bm{u}(t))\nabla \bm{u}(t))\,dx\leq\int_{\Omega}\alpha(\bm{u}(t))\gamma (B(\bm{u}(t))\nabla\bm{\varphi})\,dx,
    \\
    \mbox{ for all } \bm{\varphi}\in [V]^M, \mbox{ and a.e. } t\in(0,T).
    \end{gather*}\vspace{-3.75ex}
    \item[(S2)] $\bm{u}$ satisfies the following energy inequality: \vspace{-1ex}
    \begin{gather}
    \frac{C_A}{4}\int_{s}^{t}|\partial_t\bm{u}(\sigma)|^2_{[H]^M}\,d\sigma
      +E_0(\bm{u}(t))\leq E_0(\bm{u}(s)),
    \label{ene-inq1}
  \end{gather}
  for a.e. $ s \in [0, T) $ including $s=0$, and any $ t \in [s, T] $.
  \end{description}
\end{definition}\vspace{-1ex}
\begin{theorem}\label{mainThm1}
  Under the assumptions (A0)--(A7), the system (S) admits a energy dissipative solution $\bm{u}$.
\end{theorem}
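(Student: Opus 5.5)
The plan is to combine a regularization $\gamma_\varepsilon=\rho_\varepsilon*\gamma$ as in Example~\ref{ex1} with a time discretization in the notation of the time-discretization paragraph, derive uniform estimates from the energy, and pass to the limit using the Mosco/$\Gamma$-convergence tools of Remark~\ref{rem2} and Example~\ref{ex2}. Here $E_0$ is the energy $\mathscr{E}$ in \eqref{energy01}, and (A7) is taken to be whatever extra structural condition the paper needs.

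\textbf{Step 1: discrete scheme.} Fix $\varepsilon>0$ and $\tau>0$. Given $\bm{u}_{i-1}\in[V]^M$, define $\bm{u}_i$ as a minimizer over $[V]^M$ of
\[
\bm{v}\mapsto \frac{1}{2\tau}\bigl(A(\bm{u}_{i-1})(\bm{v}-\bm{u}_{i-1}),\bm{v}-\bm{u}_{i-1}\bigr)_{[H]^M}+\frac{\kappa}{2}|\nabla\bm{v}|^2_{[H]^{M\times N}}+\int_\Omega\alpha(\bm{v})\gamma_\varepsilon(B(\bm{v})\nabla\bm{v})\,dx+\int_\Omega G(x,\bm{v})\,dx .
\]
The coefficients are frozen only in the mobility, so this is a genuine minimizing-movement step.

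\emph{Existence.} A minimizer exists by the direct method. Coercivity comes from $\kappa>0$ together with (A4). Lower semicontinuity follows from the compact embedding $V\hookrightarrow H$, which gives a.e. convergence of $\bm{v}$, combined with convexity of $\gamma_\varepsilon$ in the gradient variable (a standard Ioffe-type argument, using the continuity of $\alpha$, $B_0$, $B_1$).

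\emph{Energy inequality.} Comparing $\bm{u}_i$ with $\bm{v}=\bm{u}_{i-1}$ and using (A4) gives
\[
\frac{C_A}{2\tau}|\bm{u}_i-\bm{u}_{i-1}|^2+E_\varepsilon(\bm{u}_i)\le E_\varepsilon(\bm{u}_{i-1}).
\]
Since $\gamma_\varepsilon\le\gamma+C\varepsilon$ by (A5), $E_\varepsilon(\bm{u}_0)$ is bounded uniformly in $\varepsilon$.

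\emph{Euler--Lagrange equation.} Because $\gamma_\varepsilon$ is smooth with bounded gradient, the chain rule gives the equation in weak form. The terms $[\nabla\alpha]\gamma_\varepsilon$ and $\alpha\nabla\gamma_\varepsilon:[\nabla B]\nabla\bm{u}_i$ arise from differentiating the coefficients. These terms lie in $L^1$ only, so the variational inequality (S1) is derived from convexity instead: test $\gamma_\varepsilon(B(\bm{u}_i)\nabla\bm{\varphi})-\gamma_\varepsilon(B(\bm{u}_i)\nabla\bm{u}_i)\ge\nabla\gamma_\varepsilon:B(\bm{u}_i)\nabla(\bm{\varphi}-\bm{u}_i)$.

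\textbf{Step 2: uniform estimates and compactness.} Summing the energy inequality gives bounds, uniform in $\tau$ and $\varepsilon$, on the following quantities:
\begin{itemize}
\item $\partial_t[\bm{u}]_\tau$ in $[\mathscr{H}]^M$;
\item $[\overline{\bm{u}}]_\tau$ in $L^\infty(0,T;[V]^M)$;
\item $\bm{w}^*_\tau:=\nabla\gamma_\varepsilon(B\nabla\overline{\bm{u}})$ in $L^\infty(Q)$, since $\gamma$ is Lipschitz.
\end{itemize}
Aubin--Lions then yields $\bm{u}$ in (S0) with $[\bm{u}]_\tau,[\overline{\bm{u}}]_\tau,[\underline{\bm{u}}]_\tau\to\bm{u}$ strongly in $C([0,T];[H]^M)$, respectively $[\mathscr{H}]^M$, and a.e. in $Q$. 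Along a subsequence, $\bm{w}^*_\tau\to\bm{w}^*$ weakly-$*$. We take $\varepsilon=\tau\to0$ jointly.

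\textbf{Step 3 (main obstacle): strong convergence of $\nabla\overline{\bm{u}}$.} The term $[\nabla\alpha]\gamma(B\nabla\bm{u})$ and the product $\bm{w}^*:[\nabla B]\nabla\bm{u}$ are nonlinear in $\nabla\bm{u}$, so weak convergence is not enough to pass to the limit in them. To obtain strong convergence, test the discrete variational inequality with $\bm{\varphi}=\bm{u}$ (after time-integration over $I$), take the limsup, and compare with the lower bounds.

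\emph{Lower bounds.} Mosco convergence, via Example~\ref{ex2}(III) applied with the a.e. convergent coefficient $B(\overline{\bm{u}})\to B(\bm{u})$, gives liminf inequalities for $\kappa|\nabla\overline{\bm{u}}|^2$ and for $\int\alpha\gamma_\varepsilon$.

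\emph{Upper bound.} The limsup side is controlled once the lower-order terms converge. Here the danger is that those terms contain $\nabla\overline{\bm{u}}$ only linearly with $L^\infty$ coefficients times a strongly convergent $\overline{\bm{u}}-\bm{u}$ factor. Since $\gamma$ is Lipschitz and $\overline{\bm{u}}-\bm{u}\to0$ in $H$, while $\nabla\overline{\bm{u}}$ is bounded only in $L^2$, products of the form $\gamma(\cdot)(\overline{\bm{u}}-\bm{u})$ are merely in $L^1$-type bounds. I would handle them by truncating the test function, or by using (A7) (presumably an $L^\infty$ bound or a smallness condition); this is the delicate point.

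\emph{Conclusion.} Fact~1 then gives
\[
\limsup\kappa|\nabla\overline{\bm{u}}|^2_{L^2(I;[H]^{M\times N})}\le\kappa|\nabla\bm{u}|^2_{L^2(I;[H]^{M\times N})},
\]
hence $\nabla\overline{\bm{u}}\to\nabla\bm{u}$ strongly in $L^2(Q)$.

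\textbf{Step 4: passage to the limit.} With strong gradient convergence, Fact~2 identifies $\bm{w}^*\in\partial\gamma(B(\bm{u})\nabla\bm{u})$ a.e. The time-integrated variational inequality passes to the limit for arbitrary $I$ and $\bm{\varphi}$, and Lebesgue points give (S1) for a.e. $t$. For (S2), the discrete energy inequality reads
\[
\frac{C_A}{2}\int_s^t|\partial_t[\bm{u}]_\tau|^2\,d\sigma+E_\varepsilon([\overline{\bm{u}}]_\tau(t))\le E_\varepsilon([\overline{\bm{u}}]_\tau(s)).
\]
Take the liminf on the left, using weak lower semicontinuity of the dissipation and Mosco convergence of the energy. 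On the right, convergence holds at $s=0$ and at a.e. $s$ by the strong gradient convergence in $L^2(0,T;[H]^{M\times N})$. Since $C_A/2\ge C_A/4$, this yields \eqref{ene-inq1}.
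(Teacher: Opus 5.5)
Your limit passage is essentially the paper's: the discrete variational inequality from convexity of $\gamma_\varepsilon$, strong convergence of $\nabla\overline{\bm u}$ by testing with the limit and using (Fact~1), identification of $\bm w^*$ by (Fact~2), and lower semicontinuity for (S2). What differs is the discrete step, and that is where your argument falls short of the stated generality. The first variation of $\bm v\mapsto\frac{1}{2\tau}\bigl(A(\bm u_{i-1})(\bm v-\bm u_{i-1}),\bm v-\bm u_{i-1}\bigr)_{[H]^M}$ in the direction $\bm\varphi$ is $\frac{1}{2\tau}\bigl((A+{}^\top\! A)(\bm u_{i-1})(\bm v-\bm u_{i-1}),\bm\varphi\bigr)_{[H]^M}$. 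So your Euler--Lagrange equation, and hence your limit inequality, contains $A^{\mathrm{s}}:=\frac12(A+{}^\top\! A)$ rather than $A$. Assumption (A4) only requires ${}^\top\bm w A(\bm v)\bm w\ge C_A|\bm w|^2$, not symmetry. The skew part contributes $\bigl((A-A^{\mathrm{s}})(\bm u)\partial_t\bm u,\bm u-\bm\varphi\bigr)_{[H]^M}$, which does not vanish for $\bm\varphi\neq\bm u$, so for nonsymmetric $A$ you have built a solution of a different system. To cover general $A$ you must impose the equation with $A(\bm u_{i-1})$ itself, and then the step is no longer a minimization. Existence no longer follows from the direct method. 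The discrete energy inequality no longer follows from comparing with $\bm v=\bm u_{i-1}$, because your $E_\varepsilon$ is not convex (through $\alpha(\bm u)$ and $B(\bm u)$). This is the setting of the paper's equation-based scheme (AP)$^\tau_{\nu,\varepsilon,\mu}$. It keeps $A(\bm u^{i-1}_{\nu,\varepsilon,\mu})$ and treats the coefficient-derivative terms explicitly. It obtains existence and \eqref{f-ene0}, with the weaker constant $C_A/4$, only for $\tau<\tau_0$ and with the extra regularizations $\nu\Upsilon_p$ and $-\mu\Delta\partial_t$. This in turn is why it needs Lemmas~\ref{lem002} and \ref{lem003} to approximate $\bm u_0$ and the test function $\bm u$ in $W^{1,p}$. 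When $A$ is symmetric (for instance diagonal mobilities, as in the grain-boundary model), your route is correct and genuinely simpler. You get existence for every $\tau$, test functions in $[V]^M$ so you can test with $\bm u$ itself, no approximation of $\bm u_0$, and the better constant $C_A/2$.

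Second, the point you flag as delicate in Step~3 is not delicate, and you should not invoke (A7), which the paper never states or uses. Since $|\gamma_\varepsilon-\gamma|\le\mathrm{Lip}(\gamma)\,\varepsilon$, (A5) and the boundedness of $B_0,B_1,\nabla\alpha,[\nabla B]$ give the pointwise bound $C(|\nabla\overline{\bm u}|+1)$ for both $[\nabla\alpha](\overline{\bm u})\gamma_\varepsilon(B(\overline{\bm u})\nabla\overline{\bm u})$ and $\alpha(\overline{\bm u})\nabla\gamma_\varepsilon(B(\overline{\bm u})\nabla\overline{\bm u}):[\nabla B](\overline{\bm u})\nabla\overline{\bm u}$. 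Both are therefore bounded in $L^2(Q)$, while $\overline{\bm u}-\bm u\to\bm 0$ in $L^2(Q)$. Cauchy--Schwarz then sends their pairings with $\overline{\bm u}-\bm u$ to zero, exactly as in \eqref{conv_12}. As written your Step~3 leaves this open, but it closes in one line. A minor slip: the right-hand side of your discrete energy inequality should be $E_\varepsilon([\underline{\bm u}]_\tau(s))$, as in \eqref{energy3}. For $s\in(t_{i-1},t_i)$ the dissipation on $(s,t_i)$ is paid for by $E_\varepsilon(\bm u_{i-1})$. Since $[\underline{\bm u}]_\tau(t)=[\overline{\bm u}]_\tau(t-\tau)$, strong convergence of $\nabla[\underline{\bm u}]_\tau$ follows from that of $\nabla[\overline{\bm u}]_\tau$, and the conclusion is unaffected.
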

\section{Proof of Theorem 1}\vspace{-1.5ex}
For the proof of Theorem 1, we use a time-discretization method. Let $\nu,\varepsilon,\mu\in(0,1)$ be fixed constants. Let $m\in\N$ denote the number of subdivisions of the time interval $(0,T)$. We set $\tau := T/m$, and define the time sequence $\{t^i\}_{i=1}^m$ by $t^i := i\tau$ for $i=1,\dots,m$.

Based on these, we consider the time-discretization scheme (AP)$^\tau_{\nu,\varepsilon,\mu}$ as an approximating problem of (S):

\noindent
(AP)$^\tau_{\nu,\varepsilon,\mu}$: \, For any $\bm{u}^{0}_{\nu,\varepsilon,\mu}\in [W^{1,p}(\Omega)]^M$, find a $ \{\bm{u}^i_{\nu,\varepsilon,\mu}\}_{i=1}^m \subset [W^{1,p}(\Omega)]^M $ such that
\begin{gather}
  \frac{1}{\tau}(A(\bm{u}^{i-1}_{\nu,\varepsilon,\mu})(\bm{u}^{i}_{\nu,\varepsilon,\mu}-\bm{u}^{i-1}_{\nu,\varepsilon,\mu}),\bm{\varphi})_{[H]^M}+\frac{\mu}{\tau}(\nabla(\bm{u}^{i}_{\nu,\varepsilon,\mu}-\bm{u}^{i-1}_{\nu,\varepsilon,\mu}),\nabla\bm{\varphi})_{[H]^{M\times N}}\nonumber
  \\\nonumber
  +(\alpha(\bm{u}^{i}_{\nu,\varepsilon,\mu})  \nabla\gamma_\varepsilon({B}(\bm{u}^{i}_{\nu,\varepsilon,\mu})\nabla\bm{u}^{i}_{\nu,\varepsilon,\mu}),B(\bm{u}^{i}_{\nu,\varepsilon,\mu})\nabla\bm{\varphi})_{[H]^{M\times N}}
  \\\nonumber
  +\nu \int_\Omega\nabla\Upsilon_p(\nabla \bm{u}^{i}_{\nu,\varepsilon,\mu}):\nabla\bm{\varphi}\,dx+\kappa(\nabla\bm{u}^{i}_{\nu,\varepsilon,\mu},\nabla\bm{\varphi})_{[H]^{M\times N}}
  \\\nonumber
  +(\nabla_{\bm{u}} G(x,\bm{u}^{i}_{\nu,\varepsilon,\mu}),\bm{\varphi})_{[H]^M}+([\nabla\alpha](\bm{u}^{i-1}_{\nu,\varepsilon,\mu})\gamma_\varepsilon({B}(\bm{u}^{i-1}_{\nu,\varepsilon,\mu})\nabla\bm{u}^{i-1}_{\nu,\varepsilon,\mu}),\bm{\varphi})_{[H]^M}
  \\\nonumber
  +(\alpha(\bm{u}^{i-1}_{\nu,\varepsilon,\mu})\nabla\gamma_\varepsilon({B}(\bm{u}^{i-1}_{\nu,\varepsilon,\mu})\nabla\bm{u}^{i-1}_{\nu,\varepsilon,\mu}):[\nabla{B}](\bm{u}^{i-1}_{\nu,\varepsilon,\mu})\nabla\bm{u}^{i-1}_{\nu,\varepsilon,\mu},\bm{\varphi})_{[H]^M}=0,
  \\\
  \label{3TimeDis-02}
  \mbox{ for all } \bm{\varphi}\in [W^{1,p}(\Omega)]^M, \mbox{ and }i=1,\dots,m.
\end{gather}
In this context, $p \in (2,\infty) \cap [N,\infty)$ is a fixed constant, and $ 0 \leq \Upsilon_p \in C^1(\R^{M \times N}) $ is a given function, which has a constant $ C_\Upsilon > 0 $ satisfying for any $W,W_1,W_2\in\R^{M\times N}$,\vspace{-1ex}
  \begin{gather}
    \frac{1}{C_\Upsilon}(|W|^p-1)\leq \Upsilon_p(W)\leq C_\Upsilon(|W|^p+1), \mbox{ and }\nonumber
  \\
    (\nabla \Upsilon_p(W_1)-\nabla \Upsilon_p(W_2)):(W_1-W_2)\geq C_\Upsilon|W_1-W_2|^p.\label{upsip}
  \end{gather}
Moreover, ${\gamma_\varepsilon}$ denotes a regularization sequence of $\gamma$ defined by $\gamma_\varepsilon := \rho_\varepsilon * \gamma$ for $\varepsilon>0$, using the standard mollifier $\rho_\varepsilon$. Additionally, (AP)$_{\nu,\varepsilon,\mu}^{\tau}$ can be regarded as a discrete gradient descent process for an approximating energy $\mathscr{E}_{\nu,\varepsilon}$, defined by \vspace{-1ex}
\begin{gather*}
  \mathscr{E}_{\nu,\varepsilon}:\bm{u}\in[W^{1,p}(\Omega)]^M\mapsto \mathscr{E}_{\nu,\varepsilon}:=\frac{\kappa}{2}\int_\Omega|\nabla \bm{u}|^2\,dx+\nu\int_\Omega \Upsilon_p(\nabla \bm{u})\,dx
\\
  +\int_\Omega \alpha(\bm{u})\gamma_\varepsilon(B(\bm{u})\nabla \bm{u}) \, dx+\int_\Omega G(x,\bm{u}) \, dx.
\end{gather*}
As is easily seen, \vspace{-1ex}
    \begin{gather}
            \gamma_\varepsilon \to \gamma \mbox{ uniformly on $ \R^{M\times N} $ as $ \varepsilon \downarrow 0 $, and }\label{gamma_ep}
            \\
            \|\nabla \gamma_\varepsilon\|_{L^\infty} \leq \|\nabla \gamma\|_{L^\infty}, \mbox{ for $ \varepsilon \in (0, 1) $.}\nonumber
    \end{gather}
    The uniform convergence of $ \{ \gamma_\varepsilon \}_{\varepsilon \in (0, 1)} $ implies 
\begin{gather}\label{gammaconv}
    \mathscr{E}_{\nu,\varepsilon}\rightarrow \mathscr{E}_{\nu,0}\mbox{ on }[H]^M,\mbox{ in the sense of $\Gamma$-convergence, as }\varepsilon\downarrow0.
\end{gather}
The following Lemmas provides a fundamental result for the scheme (AP)$_{\nu,\varepsilon,\mu}^{\tau}$, which serves as a basis for the Theorem 1 of this paper. These lemmas can be proved in the same way as in \cite{arAMSU2025}. 
    \begin{lemma}\label{003Thm1}
        There exists a sufficiently small constant $\tau_0:=\tau_0(\nu,\varepsilon,\mu)\in (0,1)$ such that for any $ \tau \in (0,\tau_0) $, $\mathrm{(AP)}^\tau_{\nu,\varepsilon,\mu}$ admits a unique solution $\{\bm{u}^i_{\nu,\varepsilon,\mu}\}_{i=1}^m$, such that:, for any $i = 1, 2, 3, \dots$,
  \begin{gather}
      \frac{C_A}{4\tau}|\bm{u}^{i}_{\nu,\varepsilon,\mu}-\bm{u}^{i-1}_{\nu,\varepsilon,\mu}|^2_{[H]^M}+\frac{\mu}{2\tau}|\nabla(\bm{u}^{i}_{\nu,\varepsilon,\mu}-\bm{u}^{i-1}_{\nu,\varepsilon,\mu})|^2_{[H]^M}\nonumber
      \\
      +\mathscr{E}_{\nu,\varepsilon}(\bm{u}^{i}_{\nu,\varepsilon,\mu}) \leq \mathscr{E}_{\nu,\varepsilon}(\bm{u}^{i-1}_{\nu,\varepsilon,\mu}).\label{f-ene0}
  \end{gather}
\end{lemma}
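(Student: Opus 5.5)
The plan is to treat each step $i$ of (AP)$^\tau_{\nu,\varepsilon,\mu}$ as a minimization problem, given $\bm{u}^{i-1}:=\bm{u}^{i-1}_{\nu,\varepsilon,\mu}\in[W^{1,p}(\Omega)]^M$. I would introduce the functional
\begin{gather*}
\Phi^i(\bm{v}):=\frac{1}{2\tau}\bigl(A(\bm{u}^{i-1})(\bm{v}-\bm{u}^{i-1}),\bm{v}-\bm{u}^{i-1}\bigr)_{[H]^M}+\frac{\mu}{2\tau}|\nabla(\bm{v}-\bm{u}^{i-1})|^2_{[H]^{M\times N}}+\frac{\kappa}{2}|\nabla\bm{v}|^2_{[H]^{M\times N}}
\\
+\nu\int_\Omega\Upsilon_p(\nabla\bm{v})\,dx+\int_\Omega\alpha(\bm{v})\gamma_\varepsilon(B(\bm{v})\nabla\bm{v})\,dx+\int_\Omega G(x,\bm{v})\,dx+(\bm{f}^{i-1},\bm{v})_{[H]^M},
\end{gather*}
where $\bm{f}^{i-1}$ collects the two explicit lower-order terms evaluated at $\bm{u}^{i-1}$. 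These terms lie in $[L^2]^M$ because $\nabla\gamma_\varepsilon$ is bounded and $\nabla\bm{u}^{i-1}\in L^p\subset L^2$. Only the symmetric part of $A$ enters the quadratic form. The extra terms in the Euler--Lagrange equation, namely $\nabla_{\bm v}$ of $\alpha(\bm v)$ and of $B(\bm v)$ inside the $\gamma_\varepsilon$ integral, are treated implicitly in (AP). They therefore have to be moved explicitly, or absorbed, and checked against the stated scheme. The cleanest route is a fixed point argument. Freeze $\bm{v}\mapsto\alpha(\bm v),B(\bm v)$ at $\bm{z}$, solve the resulting monotone problem in $\bm v$, and then show that the map $\bm z\mapsto\bm v$ is a contraction.

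For existence and uniqueness at fixed $\bm z$, I would check that the frozen operator is strictly monotone, coercive and hemicontinuous on $[W^{1,p}(\Omega)]^M$. The $p$-Laplacian-type term gives $C_\Upsilon|\cdot|^p$ monotonicity by \eqref{upsip}. The term $\bm v\mapsto B(\bm z)^*\alpha(\bm z)\nabla\gamma_\varepsilon(B(\bm z)\nabla\bm v)$ is monotone by convexity of $\gamma_\varepsilon$. The $\kappa$ and $\mu/\tau$ terms are linear and coercive, and $C_A/\tau$ controls the $\nabla_{\bm u}G$ term, which is Lipschitz with constant $L_G$ by (A2)$'$, once $\tau<C_A/L_G$. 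Browder--Minty then yields a unique solution. For the contraction I would take the difference of the frozen solutions for $\bm z_1,\bm z_2$ and test with $\bm v_1-\bm v_2$. Since $p\ge N$, the embedding $W^{1,p}\subset L^\infty$ (or $C^0$ when $p>N$) together with the Lipschitz continuity of $\alpha$, $B_0$ and $B_1$ from (A1) and (A3) bounds the coefficient differences by $C|\bm z_1-\bm z_2|_{L^\infty}$. This gives $|\bm v_1-\bm v_2|_{W^{1,p}}\le C(\nu,\varepsilon,\mu)\tau^{\theta}|\bm z_1-\bm z_2|_{W^{1,p}}$ on a bounded ball, and choosing $\tau_0$ small makes it a contraction. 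I expect this to be the main obstacle. The estimate depends on a priori bounds of $\bm v$ in $W^{1,p}$ that are uniform over the ball, and these come from coercivity and the bound $\|\nabla\gamma_\varepsilon\|_{L^\infty}\le\|\nabla\gamma\|_{L^\infty}$. The dependence of $\tau_0$ on $\nu,\varepsilon,\mu$ enters exactly here, through $\|\nabla^2\gamma_\varepsilon\|_{L^\infty}\lesssim 1/\varepsilon$ and the $\nu$-coercivity.

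For the energy inequality \eqref{f-ene0}, I would test (AP) with $\bm\varphi=\bm u^i-\bm u^{i-1}$. The first term gives at least $\frac{C_A}{\tau}|\bm u^i-\bm u^{i-1}|^2$ and the second gives $\frac{\mu}{\tau}|\nabla(\bm u^i-\bm u^{i-1})|^2$. Convexity bounds the $\kappa$ and $\Upsilon_p$ terms from below by the corresponding increments of the energy. For the $\gamma_\varepsilon$ part I would write
\[
\alpha(\bm u^i)\gamma_\varepsilon(B(\bm u^i)\nabla\bm u^i)-\alpha(\bm u^{i-1})\gamma_\varepsilon(B(\bm u^{i-1})\nabla\bm u^{i-1}),
\]
use convexity of $\gamma_\varepsilon$ at $B(\bm u^i)\nabla\bm u^i$, and Taylor-expand $\alpha$ and $B$ to second order using their $W^{2,\infty}$ bounds. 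The first-order parts are then exactly the explicit terms at $\bm u^{i-1}$. The remainders are bounded by $C|\bm u^i-\bm u^{i-1}|^2_{L^\infty}(1+|\nabla\bm u^{i-1}|)$, and I would absorb them into $\frac{\mu}{2\tau}|\nabla(\cdot)|^2$ and $\frac{C_A}{4\tau}|\cdot|^2$ for small $\tau$ via $W^{1,p}\subset L^\infty$ and Gagliardo--Nirenberg. The $G$ term is handled in the same way by $C^{1,1}$ Taylor expansion with remainder $\frac{L_G}{2}|\cdot|^2$. Collecting terms leaves the constants $\frac{C_A}{4\tau}$ and $\frac{\mu}{2\tau}$ in \eqref{f-ene0}.
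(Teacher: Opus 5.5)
The paper gives no argument for this lemma; it only refers to the earlier pseudo-parabolic work. Your architecture is the natural one: Browder--Minty for the frozen problem when $\tau<C_A/L_G$, a fixed point in $\bm z$ for small $\tau$, then testing with $\bm d:=\bm u^i-\bm u^{i-1}$, using convexity of $\gamma_\varepsilon$ at the new point, and expanding $\bm w\mapsto\alpha(\bm w)\gamma_\varepsilon(B(\bm w)\nabla\bm u^{i-1})$ at the old point. (In (AP) the $[\nabla\alpha]$ and $[\nabla B]$ terms are explicit, not implicit, so $\Phi^i$ is not a potential for the scheme; since you drop it, this is harmless.) The gap is in how you close the estimates.

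\textbf{Contraction step.} Test the difference of two frozen problems with $\bm\delta:=\bm v_1-\bm v_2$, taking $\tau\le C_A/(2L_G)$. This gives
\[
\frac{C_A}{2\tau}|\bm\delta|^2_{[H]^M}+\frac{\mu}{\tau}|\nabla\bm\delta|^2_{[H]^{M\times N}}+\nu C_\Upsilon\int_\Omega|\nabla\bm\delta|^p\,dx\le C\int_\Omega\Bigl(1+\frac{|\nabla\bm v_2|}{\varepsilon}\Bigr)|\bm z_1-\bm z_2|\,|\nabla\bm\delta|\,dx .
\]
The $p$-term yields only $|\nabla\bm\delta|_{L^p}\lesssim|\bm z_1-\bm z_2|^{2/p}_{[V]^M}$ with $2/p<1$. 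So this estimate gives no contraction in $[W^{1,p}(\Omega)]^M$. Also, the embedding $W^{1,p}\subset L^\infty$ fails for $p=N\ge 3$, which the hypothesis $p\in(2,\infty)\cap[N,\infty)$ allows.

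What works is Hölder with exponents $p$, $\frac{2p}{p-2}$, $2$, together with $[V]^M\subset[L^{2p/(p-2)}(\Omega)]^M$. That embedding holds exactly because $p\ge N$. You get $|\bm\delta|_{[V]^M}\le C(\nu,\varepsilon,\mu)\,\tau\,|\bm z_1-\bm z_2|_{[V]^M}$, using the $\mu/\tau$ term. Then apply Banach's theorem on the $W^{1,p}$-ball that contains the whole range of $\bm z\mapsto\bm v$, with the $[V]^M$-metric; the ball is closed in $[V]^M$ by reflexivity. Every solution of (AP) is a fixed point in that ball, so uniqueness is global.

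\textbf{Energy step.} Your remainder bound $C|\bm d|^2(1+|\nabla\bm u^{i-1}|)$ is wrong. You need an upper bound for $\gamma_\varepsilon(B(\bm u^i)\nabla\bm u^{i-1})$ by its first-order expansion at $\bm u^{i-1}$, and convexity gives only lower bounds. The remainder therefore contains the curvature term $\alpha\,\nabla^2\gamma_\varepsilon[\bm Y,\bm Y]$ with $\bm Y=([\nabla B](\bm u^{i-1})\nabla\bm u^{i-1})\bm d$. This term is bounded only by $\frac{C}{\varepsilon}|\nabla\bm u^{i-1}|^2|\bm d|^2$.

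Moreover, a quantity like $|\bm d|^2_{L^\infty}$ cannot be absorbed into $\frac{C_A}{4\tau}|\bm d|^2_{[H]^M}+\frac{\mu}{2\tau}|\nabla\bm d|^2_{[H]^{M\times N}}$ when $N\ge2$. Interpolating against $|\bm d|_{W^{1,p}}$ (Gagliardo--Nirenberg) gives only a subquadratic power of $|\bm d|_{[V]^M}$, which fails for small $\bm d$. The correct absorption is
\[
\int_\Omega|\nabla\bm u^{i-1}|^2|\bm d|^2\,dx\le C|\nabla\bm u^{i-1}|^2_{L^p}|\bm d|^2_{[V]^M},
\]
by the same Hölder inequality and embedding. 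This is where $\mu>0$ and $p\ge N$ are really used.

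\textbf{Uniformity in $i$.} Both steps have constants depending on $|\nabla\bm u^{i-1}|_{L^p}$, so a single $\tau_0$ for all $i$ needs an induction you do not mention. If \eqref{f-ene0} holds up to step $i-1$, the lower growth bound on $\Upsilon_p$ gives $\frac{\nu}{C_\Upsilon}(|\nabla\bm u^{i-1}|^p_{L^p}-|\Omega|)\le\mathscr{E}_{\nu,\varepsilon}(\bm u^{i-1})\le\mathscr{E}_{\nu,\varepsilon}(\bm u^{0})$. This bound is independent of $i$ and $\tau$, so $\tau_0$ depends on $\nu,\varepsilon,\mu$ and on the initial energy.
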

\begin{lemma}\label{lem002}
For any $\bm{w}_0\in [V]^M$, there exists a sequence $ \{\bm{w}_\nu\}_{\nu\in(0,1)}\subset[W^{1,p}(\Omega)]^M $ such that \vspace{-1ex}
\begin{align*}
  \bm{w}_\nu \rightarrow \bm{w}_0 \mbox{ in } [V]^M, \mbox{ and } \mathscr{E}_\nu(\bm{w}_\nu)\rightarrow \mathscr{E}_0(\bm{w}_0) \mbox{ as } \nu\downarrow 0.
\end{align*}
\end{lemma}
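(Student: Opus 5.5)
We must prove Lemma \ref{lem002}: for $\bm w_0\in[V]^M$, build $\bm w_\nu\in[W^{1,p}(\Omega)]^M$ with $\bm w_\nu\to\bm w_0$ in $[V]^M$ and $\mathscr E_\nu(\bm w_\nu)\to\mathscr E_0(\bm w_0)$ as $\nu\downarrow0$. Here $\mathscr E_\nu$ is read as $\mathscr E_{\nu,0}$, i.e.\ $\varepsilon=0$, so $\mathscr E_0=\mathscr E$ from \eqref{energy01}.

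\textbf{Step 1: a smooth approximating sequence.} Since $\Omega$ is a bounded Lipschitz domain, $C^\infty(\overline\Omega)$ (restrictions of $C_c^\infty(\R^N)$ functions) is dense in $H^1(\Omega)$, via a bounded extension operator followed by mollification. We take $\{\bm z_n\}\subset[C^\infty(\overline\Omega)]^M\subset[W^{1,p}(\Omega)]^M$ with $\bm z_n\to\bm w_0$ in $[V]^M$. Passing to a subsequence, we also have $\bm z_n\to\bm w_0$ and $\nabla\bm z_n\to\nabla\bm w_0$ a.e.\ in $\Omega$.

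\textbf{Step 2: convergence of the $\nu$-independent part.} We show that $\mathscr E(\bm z_n)\to\mathscr E(\bm w_0)$, treating the three terms separately.
\begin{itemize}
\item The Dirichlet term converges by strong convergence of the gradients in $L^2$.
\item For the $G$ term, (A2)$'$ gives $|G(x,\bm z_n)-G(x,\bm w_0)|\le L_G|\bm z_n-\bm w_0|$, and the right-hand side tends to $0$ in $L^1$.
\item For the anisotropic term, $B(\bm z_n)\nabla\bm z_n\to B(\bm w_0)\nabla\bm w_0$ in $L^2$. This holds because $B_0,B_1$ are bounded and continuous, so the difference splits as $B(\bm z_n)(\nabla\bm z_n-\nabla\bm w_0)$ plus $(B(\bm z_n)-B(\bm w_0))\nabla\bm w_0$; the first piece is controlled by $\|B\|_\infty$, and the second tends to $0$ by dominated convergence. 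Since $\gamma$ is Lipschitz, $\gamma(B(\bm z_n)\nabla\bm z_n)\to\gamma(B(\bm w_0)\nabla\bm w_0)$ in $L^2$. Also $\alpha(\bm z_n)\to\alpha(\bm w_0)$ a.e.\ and boundedly, and $\gamma(B\nabla\bm w_0)\le C_\gamma(|B\nabla\bm w_0|+1)\in L^1$. So the product converges in $L^1$ by the generalized dominated convergence theorem.
\end{itemize}

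\textbf{Step 3: diagonal choice of $\nu$-dependence.} Since $\bm z_n\in W^{1,\infty}$, each quantity $c_n:=\int_\Omega\Upsilon_p(\nabla\bm z_n)\,dx\le C_\Upsilon(|\nabla\bm z_n|^p_{L^p}+|\Omega|)$ is finite. Define $n(\nu):=\max\{n\le 1/\nu : \nu c_n\le n^{-1}\}$, or more simply pick $n(\nu)\to\infty$ slowly enough that $\nu\,c_{n(\nu)}\to0$; this is possible because for each fixed $n$ we have $\nu c_n\to0$. Set $\bm w_\nu:=\bm z_{n(\nu)}$. Then $\bm w_\nu\to\bm w_0$ in $[V]^M$, and
\[
\mathscr E_\nu(\bm w_\nu)=\mathscr E(\bm z_{n(\nu)})+\nu c_{n(\nu)}\to\mathscr E(\bm w_0).
\]

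\textbf{Main obstacle.} The delicate point is the convergence of the nonconvex, unknown-dependent term $\int\alpha(\bm u)\gamma(B(\bm u)\nabla\bm u)\,dx$ in Step 2. It relies on the Lipschitz property and linear growth of $\gamma$ (A5), together with the boundedness and continuity of $B_0$, $B_1$, and $\alpha$. Once strong $L^2$ convergence of $B(\bm z_n)\nabla\bm z_n$ is secured as above, the rest is routine.
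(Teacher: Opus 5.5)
Your proposal is correct. The paper writes out no proof of this lemma and only defers to \cite{arAMSU2025}. Your construction is the natural one, the same mechanism the paper encodes in the time-dependent Lemma~\ref{lem003}: approximate $\bm w_0$ in $[V]^M$ by smooth functions, use continuity of $\mathscr E$ under strong $[V]^M$-convergence, and tie the approximation index to $\nu$ so that $\nu\int_\Omega\Upsilon_p(\nabla\bm w_\nu)\,dx\to0$. Your Step~2 handles the continuity of $\int_\Omega\alpha(\bm u)\gamma(B(\bm u)\nabla\bm u)\,dx$ soundly, via the Lipschitz bound on $\gamma$, the boundedness and continuity of $\alpha,B_0,B_1$, and dominated convergence.
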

\begin{remark}\label{rem001}
    From Lemma \ref{lem002}, there exists a constant $\nu_0\in(0,1)$ such that \vspace{-2ex}
    \begin{align*}
      |\mathscr{E}_\nu(\bm{w}_\nu)-\mathscr{E}_0(\bm{w}_0)|<1 \mbox{ for any $\nu\in(0,\nu_0)$}.
    \end{align*}
\end{remark}
\begin{lemma}\label{lem003}
    For any $\bm{\psi}\in [\mathscr{V}]^M$, and $\{\nu_n\}_{n\in\N}\subset (0,1);\nu_n\downarrow0$, there exists a sequence $\{ {\bm{\psi}_n} \}_{n\in\N} \subset [C^\infty(\overline{Q})]^M$ satisfying the following properties.\vspace{-1ex}
    \begin{align*}
      &\bm{\psi}_n \rightarrow\bm{\psi} \mbox{ in }[\mathscr{V}]^M, \mbox{ and }\nu^{\frac{1}{p}}_n\nabla\bm{\psi}_n \rightarrow 0 \mbox{ in }L^p(0,T;[L^p(\Omega)]^{M\times N})\mbox{ as }n\rightarrow\infty.
    \end{align*}
  \end{lemma}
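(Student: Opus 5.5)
We construct the approximating density sequence by hand: given $\bm{\psi}\in[\mathscr{V}]^M=L^2(0,T;[V]^M)$ and $\nu_n\downarrow0$, we first approximate $\bm{\psi}$ by smooth functions and then slow down the approximation index so that the $L^p$-norm of the gradients, multiplied by $\nu_n^{1/p}$, still tends to zero. The point is that smooth functions on $\overline{Q}$ are dense in $L^2(0,T;H^1(\Omega))$, but their gradients may blow up in $L^p$. The weight $\nu_n^{1/p}$ leaves room to control this by a diagonal choice.

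\emph{Step 1 (density).} Since $\Omega$ is a bounded Lipschitz domain, $C^\infty(\overline{\Omega})$ is dense in $V=H^1(\Omega)$; this can be shown via a bounded extension operator $H^1(\Omega)\to H^1(\R^N)$ followed by mollification and restriction. Combining this with the density of $C^\infty([0,T];X)$-type tensor sums in $L^2(0,T;X)$ (or directly: extend in $x$, extend by zero in $t$, mollify in $(t,x)$), we obtain a sequence $\{\bm{\phi}_k\}_{k\in\N}\subset[C^\infty(\overline{Q})]^M$ with $\bm{\phi}_k\to\bm{\psi}$ in $[\mathscr{V}]^M$. Since $\overline{Q}$ is compact, each $\nabla\bm{\phi}_k$ is bounded, so
\[
c_k:=|\nabla\bm{\phi}_k|_{L^p(0,T;[L^p(\Omega)]^{M\times N})}<\infty .
\]

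\emph{Step 2 (diagonal choice).} Define $k(n):=\max\bigl\{k\le n \,\big|\, \nu_n^{1/p}c_j\le 1/j\ \text{for all } j\le k\bigr\}$, with $k(n):=1$ if the set is empty (then at worst $\nu_n^{1/p}c_1$, which tends to zero anyway). Because $\nu_n\to0$, for each fixed $K$ we have $\nu_n^{1/p}\max_{j\le K}c_j\le 1/K$ for all large $n$. Hence $k(n)\ge K$ eventually, that is, $k(n)\to\infty$. Set $\bm{\psi}_n:=\bm{\phi}_{k(n)}$. Then:
\begin{itemize}
\item $\bm{\psi}_n\to\bm{\psi}$ in $[\mathscr{V}]^M$, since it is a subsequence-type reindexing with $k(n)\to\infty$;
\item $\nu_n^{1/p}|\nabla\bm{\psi}_n|_{L^p}\le 1/k(n)\to0$.
\end{itemize}
An equivalent and simpler alternative is to choose $k(n)\to\infty$ slowly enough that $\nu_n^{1/p}c_{k(n)}\to0$; this is possible whenever $c_k<\infty$ for each $k$.

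\emph{Main obstacle.} The only delicate point is Step 1: density of $C^\infty(\overline{Q})$ in $L^2(0,T;H^1(\Omega))$ for a merely Lipschitz $\Omega$. We handle it through the Stein (or Calderón) extension operator. Extend $\bm{\psi}(t)$ for a.e. $t$ to $\R^N$; this is linear and bounded, hence measurable in $t$. Then extend by zero in $t$ outside $(0,T)$ and convolve with a space-time mollifier. This yields smooth functions converging in $L^2(0,T;H^1)$ on $Q$. Everything else is elementary.
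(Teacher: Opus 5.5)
Your proof is correct. Density of $[C^\infty(\overline{Q})]^M$ in $[\mathscr{V}]^M$ (via a bounded extension $H^1(\Omega)\to H^1(\R^N)$, zero extension in $t$, and space--time mollification), combined with the diagonal reindexing $\bm{\psi}_n:=\bm{\phi}_{k(n)}$ where $k(n)\to\infty$ and $\nu_n^{1/p}|\nabla\bm{\phi}_{k(n)}|_{L^p(Q)}\to 0$, yields both required convergences. The paper gives no argument of its own here and only says the lemma is proved as in \cite{arAMSU2025}; your density-plus-diagonal-selection argument is the standard one for this kind of statement, presumably what that reference does, though this cannot be checked against the paper.
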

Let $\{\bm{u}^0_\nu\}_{\nu\in(0,\nu_0)}$ be a sequence of initial value for Lemma \ref{003Thm1}, constructed by Lemma \ref{lem002} with $\bm{w}_0=\bm{u}_0$. Then, owing to the uniform estimate for $ \{ \|\nabla \gamma_\varepsilon\|_{L^\infty} \}_{\varepsilon \in (0, 1)} $ and Remark \ref{rem001}, we have
   \begin{align}\label{constc_E}
       c_E:=\sup_{\substack{\nu\in(0,\nu_0)\\ \varepsilon,\mu\in(0,1)}}\mathscr{E}_{\nu,\varepsilon}(\bm{u}^{0}_{\nu,\varepsilon,\mu}) \leq \mathscr{E}_0(\bm{u}_0)+1+\|\nabla \gamma\|_{L^\infty}|\Omega|<\infty,
   \end{align}
    {where $ |\Omega| $ denotes the Lebesgue measure of $ \Omega \subset \R^N $. }

    Additionally, taking into account Example \ref{ex1}, we observe that
  \begin{align}
    &\nabla\gamma_{\varepsilon}(B( [\underline{\bm{u}}_{\nu,\varepsilon,\mu}]_\tau(t))\nabla [\underline{\bm{u}}_{\nu,\varepsilon,\mu}]_\tau(t))\in\partial\Phi_{\varepsilon}(B( [\underline{\bm{u}}_{\nu,\varepsilon,\mu}]_\tau(t))\nabla [\underline{\bm{u}}_{\nu,\varepsilon,\mu}]_\tau(t))\,
    \label{subdig1}
    \\
    &\hspace{12ex}\mbox{ for all } \nu\in (0,\nu_0),~\varepsilon,\mu\in(0,1), \mbox{ and a.e. }t\in(0,T),
    \nonumber
  \end{align}
  and hence, 
  \begin{gather}
    \nabla\gamma_{\varepsilon}(B( [\underline{\bm{u}}_{\nu,\varepsilon,\mu}]_\tau)\nabla [\underline{\bm{u}}_{\nu,\varepsilon,\mu}]_\tau)\in\partial\widehat{\Phi}^I_{\varepsilon}(B( [\underline{\bm{u}}_{\nu,\varepsilon,\mu}]_\tau)\nabla [\underline{\bm{u}}_{\nu,\varepsilon,\mu}]_\tau),
    \label{subdig2}
    \\
    \mbox{ for all } \nu\in (0,\nu_0),~\varepsilon,\mu\in(0,1), \mbox{ and any open interval }I\subset(0,T),
    \nonumber
  \end{gather}
  where  $[\bm{u}_{\nu,\varepsilon,\mu}]_\tau$, $ [\overline{\bm{u}}_{\nu,\varepsilon,\mu}]_\tau $, and $ [\underline{\bm{u}}_{\nu,\varepsilon,\mu}]_\tau $ are the time-interpolation of $ \{ \bm{u}_{\nu,\varepsilon,\mu}^i \}_{i = 1}^m $, for $ \nu\in(0,\nu_0),~\varepsilon \in (0, 1),~\mu \in (0, 1) $ and $ \tau \in (0, \tau_0(\nu,\varepsilon,\mu)) $.

  \noindent
  {\it Proof of Theorem 1.}  From Lemma \ref{003Thm1} and \eqref{upsip}, we can see the following boundedness:
\begin{description}
    \item[(B-1)]$\{[\bm{u}_{\nu,\varepsilon,\mu}]_\tau~|~\tau\in(0,\tau_0),~\nu\in(0,\nu_0),~\varepsilon\in(0,1),\mu\in(0,1)\}$ is bounded in $L^\infty(0,T;[V]^M)$ and in $W^{1,2}(0,T;[H]^M)$,\vspace{-1ex}
    \item[(B-2)] $\{\sqrt{\mu}\partial_t[\nabla\bm{u}_{\nu,\varepsilon,\mu}]_\tau~|~\tau\in(0,\tau_0),~\nu\in(0,\nu_0),~\varepsilon\in(0,1),\mu\in(0,1)\}$ is bounded in $L^2(0,T;[H]^{M\times N})$,\vspace{-1ex}
    \item[(B-3)]$\{[\overline{\bm{u}}_{\nu,\varepsilon,\mu}]_\tau~|~\tau\in(0,\tau_0),~\nu\in(0,\nu_0),~\varepsilon\in(0,1),\mu\in(0,1)\}$ and $\{[\underline{\bm{u}}_{\nu,\varepsilon,\mu}]_{\tau}~|~\tau\in(0,\tau_0),~\nu\in(0,\nu_0),~\varepsilon\in(0,1),\mu\in(0,1)\}$ are bounded in $L^\infty(0,T;[V]^M)$,\vspace{-1ex}
    \item[(B-4)]$\{\nabla\gamma_{\varepsilon}(B( [\underline{\bm{u}}_{\nu,\varepsilon,\mu}]_\tau(t))\nabla [\underline{\bm{u}}_{\nu,\varepsilon,\mu}]_\tau(t))~|~\tau\in(0,\tau_0),~\nu\in(0,\nu_0),~\varepsilon\in(0,1),\mu\in(0,1)\}$ is bounded in $L^\infty(Q;\R^{M\times N})$,\vspace{-1ex}
    \item[(B-5)]The function of time $t\in[0,T]\mapsto \mathscr{E}_{\nu,\varepsilon}([\overline{\bm{u}}_{\nu,\varepsilon,\mu}]_\tau(t))\in[0,\infty)$ and $t\in [0,T]\mapsto \mathscr{E}_{\nu,\varepsilon}([\underline{\bm{u}}_{\nu,\varepsilon,\mu}]_\tau(t))\in[0,\infty)$ are nonincreasing for every $\nu\in (0,\nu_0)$, $0<\varepsilon<1$, $\mu\in(0,1)$ and $0<\tau<\tau_0(\varepsilon)$. Moreover, $\{\mathscr{E}_{\nu,\varepsilon}(\bm{u}_{\nu}^0)~|~\nu\in(0,\nu_0),~\varepsilon\in(0,1)\}$ is bounded, and hence, the class $\{\mathscr{E}_{\nu,\varepsilon}([\overline{\bm{u}}_{\nu,\varepsilon,\mu}]_{\tau})~|~\tau\in(0,\tau_0),~\nu\in(0,\nu_0),~\varepsilon\in(0,1),\mu\in(0,1)\} $ and $\{\mathscr{E}_{\nu,\varepsilon}([\underline{\bm{u}}_{\nu,\varepsilon,\mu}]_{\tau})~|~\tau\in(0,\tau_0),~\nu\in(0,\nu_0),~\varepsilon\in(0,1),\mu\in(0,1)\} $ are bounded in $BV(0,T)$,\vspace{-1ex}
    \item[(B-6)] $\{\nu\Upsilon_p(\nabla[\overline{\bm{u}}_{\nu,\varepsilon}]_\tau)~|~\tau\in(0,\tau_0),~\nu\in(0,\nu_0),~\varepsilon\in(0,1),\mu\in(0,1)\}$ and $\{\nu\Upsilon_p(\nabla[\underline{\bm{u}}_{\nu,\varepsilon}]_{\tau})~|~\tau\in(0,\tau_0),~\nu\in(0,\nu_0),~\varepsilon\in(0,1),\mu\in(0,1)\}$ are bounded in $L^\infty(0,T;L^1(\Omega))$.
  \end{description}

  On account of (B-1)--(B-3), we can apply the compactness theory of Aubin's type \cite[Corollary 4]{MR0916688}, and find a sequence $\{\nu_n\}_{n\in\N}\subset (0,\nu_0)$, $\{\varepsilon_n\}_{n\in\N}\subset(0,1)$ and $\{\mu_n\}_{n\in\N}\subset(0,1)$ with $ \{ \tau_n \}_{n\in\N} \subset (0, 1) $, and a function $\bm{u}\in[\mathscr{H}]^M$ with $\bm{w}^*\in L^\infty(Q;\R^{M\times N})$ such that:\vspace{-1ex} 
\[
    \nu_n \downarrow0,~\varepsilon_n \downarrow 0,~\mu_n \downarrow 0, ~ \tau_n := \frac{1}{2} \bigl( \tau_0(\nu_n,\varepsilon_n,\mu_n) \wedge \nu_n \wedge \varepsilon_n\wedge\mu_n \wedge 1 \bigr) \downarrow 0,\vspace{-2ex}
\]
 \begin{align}
    & \bm{u}_n : =  [{\bm{u}}_{\nu_n,\varepsilon_n,\mu_n}]_{\tau_n}\rightarrow \bm{u}\mbox{ in }C([0,T];[H]^M),\mbox{ weakly in } W^{ 1 , 2 }( 0,T ; [H]^M),
    \nonumber 
    \\
      &\hspace*{10ex}\mbox{ weakly-}* \mbox{ in } L^\infty( 0,T ; [V]^M) , \mbox{ as $ n \to \infty $,} \label{conv_111}
      \\
      &\mu_n\partial_t\nabla\bm{u}_n\rightarrow 0 \mbox{ weakly in }L^2(0,T;[H]^{M\times N}), \mbox{ as $ n \to \infty $,}
    \label{conv_1}
  \end{align}
  and
  \begin{align}
      &\nabla\gamma_{\varepsilon_n}(B( [\underline{\bm{u}}_{\nu_n,\varepsilon_n,\mu_n}]_{\tau_n})\nabla [\underline{\bm{u}}_{\nu_n,\varepsilon_n,\mu_n}]_{\tau_n})~\rightarrow\bm{w}^*\mbox{ weakly-}*\mbox{in }L^\infty(Q),
    \label{conv_2}
  \end{align}
  in particular, from Lemma \ref{lem002} 
  \begin{align}\label{conv_3}
    \bm{u}(0)=\lim_{n\rightarrow\infty}\bm{u}_n(0)=\lim_{n\rightarrow\infty}\bm{u}^0_{\nu_n}=\bm{u}_0\mbox{ in }[H]^M.
  \end{align}
    Here, since, $\mbox{ for }t\in[t_{i-1},t_i),~i=1,\dots,m,~\tau\in(0,\tau_0),~\nu,\varepsilon,\mu\in(0,1),$
  \begin{align*}
    &\left\{
    \begin{aligned}
      &\max\{|([\overline{\bm{u}}_{\nu,\varepsilon,\mu}]_{\tau}-\bm{u}_{n})(t)|_{[H]^M},|([\underline{\bm{u}}_{\nu,\varepsilon,\mu}]_{\tau}-\bm{u}_{n})(t)|_{[H]^M}\} 
      \\
      &\quad\leq \int_{t_{i-1}}^{t_i}|\partial_t\bm{u}_{n}(t)|_{[H]^M}\,dt\leq \tau^{\frac{1}{2}}|\partial_t\bm{u}_{n}|_{[\mathscr{H}]^M},
    \end{aligned}
    \right.
    \end{align*}
  one can also see from \eqref{conv_111} that:
  \begin{align}
    &\overline{\bm{u}}_{n}:=[\overline{\bm{u}}_{\nu_n,\varepsilon_n,\mu_n}]_{\tau_n}\rightarrow \bm{u},~\underline{\bm{u}}_{n}:=[\underline{\bm{u}}_{\nu_n,\varepsilon_n,\mu_n}]_{\tau_n}\rightarrow \bm{u} \mbox{ in }L^\infty(0,T;[H]^M),
    \nonumber
    \\
    &\hspace*{10ex}\mbox{ weakly-}* \mbox{ in }L^\infty(0,T;[V]^M),\mbox{ as } n\rightarrow\infty,
    \label{conv_4}
    \\
    &\overline{\bm{u}}_{n}(t)\rightarrow \bm{u}(t), \underline{\bm{u}}_{n}(t)\rightarrow \bm{u}(t)\mbox{ in }[H]^M \mbox{ and weakly in }[V]^M,
    \nonumber
    \\
    &\hspace{15ex}\mbox{ as } n\rightarrow\infty,\mbox{ for any }t\in(0,T).
    \label{conv_5}
\end{align}
Moreover, (B-4) enable us to see
\begin{gather}\label{conv_6}
    \bigl| \mathscr{E}_{\nu_n,\varepsilon_n}(\overline{\bm{u}}_n) -\mathscr{E}_{\nu_n,\varepsilon_n}(\underline{\bm{u}}_n) \bigr|_{L^1(0, T)} \leq 2c_E \tau_n \to 0, \mbox{ as $ n \to \infty $}.
\end{gather}
So, applying Helly's selection theorem \cite[Chapter 7, p.167]{rudin1976principles}, we will find a bounded and nonincreasing function $\mathcal{J}_*:[0,T]\mapsto[0,\infty)$, such that 
\begin{align}
    &\mathscr{E}_{\nu_n,\varepsilon_n}(\overline{\bm{u}}_{n})\rightarrow\mathcal{J}_* \mbox{ and } \mathscr{E}_{\nu_n,\varepsilon_n}(\underline{\bm{u}}_{n})\rightarrow\mathcal{J}_* \nonumber
  \\
  & \qquad \mbox{ weakly-}*\mbox{ in }BV(0,T),\mbox{ and }\mbox{weakly-}* \mbox{ in }L^\infty(0,T),
  \label{conv_ene05}
  \\
    &\mathscr{E}_{\nu_n,\varepsilon_n} (\overline{\bm{u}}_{n}(t)) \rightarrow \mathcal{J}_*(t) \mbox{ and } \mathscr{E}_{\nu_n,\varepsilon_n} (\underline{\bm{u}}_{n}(t)) \rightarrow \mathcal{J}_*(t), \mbox{ for any }t\in[0,T], \nonumber
\end{align}
as $ n \to \infty $, by taking a subsequence if necessary.
Now, let us show the limit function $\bm{u}$ is a solution to the system (S). (S0) can be checked by \eqref{conv_111} and \eqref{conv_3}. Next, let us show $\bm{u}$ satisfies the variational inequalities (S1). Let us take any $ t \in (0, T] $. Then, from \eqref{3TimeDis-02}, the sequences as in \eqref{conv_1}--\eqref{conv_4} satisfy the following inequality:
      \begin{align}
    &\int^t_0(A(\underline{\bm{u}}_n(\sigma))\partial_t\bm{u}_n,(\overline{\bm{u}}_n-\bm{\omega})(\sigma))_{[H]^M}\,d\sigma+\nu_n\int^t_0\int_\Omega \Upsilon_p(\nabla \overline{\bm{u}}_n(\sigma))\,dxd\sigma\nonumber
    \\
    &\quad+\mu_n\int^t_0(\nabla\partial_t\bm{u}_n(\sigma),\nabla(\overline{\bm{u}}_n-\bm{\omega})(\sigma))_{[H]^{M\times N}}\,d\sigma
    \nonumber
    \\
    &\quad+\int^t_0(\nabla_{\bm{u}} G(x,\overline{\bm{u}}_n(\sigma)),(\overline{\bm{u}}_n-\bm{\omega})(\sigma))_{[H]^M}\,d\sigma
    \nonumber
    \\
    &\quad +\int^t_0([\nabla\alpha](\underline{\bm{u}}_n(\sigma))\gamma_{\varepsilon_n}(B(\underline{\bm{u}}_n(\sigma))\nabla\underline{\bm{u}}_n(\sigma)),(\overline{\bm{u}}_n-\bm{\omega})(\sigma))_{[H]^M}\,d\sigma
    \nonumber
    \\
    &\quad+\frac{\kappa}{2}\int_0^t |\nabla\overline{\bm{u}}_n(\sigma)|^2_{[H]^{M\times N}}\,d\sigma+\int^t_0(\alpha(\underline{\bm{u}}_n(\sigma))\nabla\gamma_{\varepsilon_n}({B}(\underline{\bm{u}}_n(\sigma))\nabla\underline{\bm{u}}_n(\sigma)):\nonumber
    \\
    &\qquad:[\nabla{B}](\underline{\bm{u}}_n(\sigma))\nabla\underline{\bm{u}}_n(\sigma),(\overline{\bm{u}}_n-\bm{\omega})(\sigma))_{[H]^M}\,d\sigma
\nonumber
    \\
    &\quad +\int^t_0\int_\Omega\alpha(\overline{\bm{u}}_n(\sigma))\gamma_{\varepsilon_n}({B}(\overline{\bm{u}}_n(\sigma))\nabla\overline{\bm{u}}_n(\sigma))\,dxd\sigma
    \label{conv_8}
    \\
    &\leq \int^t_0\int_\Omega\alpha(\overline{\bm{u}}_n(\sigma))\gamma_{\varepsilon_n}({B}(\overline{\bm{u}}_n(\sigma))\nabla\bm{\omega}(\sigma))\,dxd\sigma+\nu_n\int^t_0\int_\Omega \Upsilon_p(\nabla \bm{\omega}(\sigma))\,dxd\sigma
    \nonumber
    \\
    &\quad+\frac{\kappa}{2}\int_0^t |\nabla\bm{\omega}(\sigma)|^2_{[H]^{M\times N}}\,d\sigma\mbox{ for all } \bm{\omega}\in L^2(0,T;[W^{1,p}(\Omega)]^M), \mbox{ and }n\in\N.
    \nonumber
  \end{align}
 From \eqref{conv_5}, (B-6), and the fact that $\Upsilon_p$ takes nonnegative values, the following is obvious.
  \begin{align}\label{conv_9}
    &\ulim_{n \to \infty}\int_0^t\int_\Omega|\nabla\bm{u}_n(\sigma)|^2_{[H]^{M\times N}}\,dxd\sigma\geq\int_0^t\int_\Omega|\nabla\bm{u}(\sigma)|^2_{[H]^{M\times N}}\,dxd\sigma,
    \\
    &\ulim_{n \to \infty}\nu_n\int_0^t\int_\Omega  \Upsilon_p(\nabla\overline{\bm{u}}_n(\sigma))\,dxd\sigma\geq0.
  \end{align}
  Also, by using (A1), (A3), (A5), \eqref{gamma_ep} and \eqref{conv_4}, weakly lower-semicontinuity of $\gamma_\varepsilon$ and Fatou's lemma, one can see 
  \begin{align}
    &\ulim_{n\rightarrow\infty} \int^t_0\int_\Omega\alpha(\overline{\bm{u}}_n(\sigma))\gamma_{\varepsilon_n}({B}(\overline{\bm{u}}_n(\sigma))\nabla\overline{\bm{u}}_n(\sigma))\,dxd\sigma
    \nonumber
    \\
    &\geq -C_\gamma\|\nabla\alpha\|_{L^\infty}\|B\|_{L^\infty}\lim_{n\rightarrow\infty}\int^t_0\int_\Omega|\overline{\bm{u}}_n(\sigma)-{\bm{u}}(\sigma)|(|\nabla\overline{\bm{u}}_n(\sigma)|+1)\,dxd\sigma
    \nonumber
    \\
    &\quad -\lim_{n\rightarrow\infty}\|\gamma_{\varepsilon_n}-\gamma\|_{L^\infty}\int^t_0\int_\Omega\alpha(\bm{u}(\sigma))\,dxd\sigma
    \nonumber
    \\
    &\quad +\liminf_{n\rightarrow\infty}\int^t_0\int_\Omega\alpha(\bm{u}(\sigma))\gamma({B}(\overline{\bm{u}}_n(\sigma))\nabla\overline{\bm{u}}_n(\sigma))\,dxd\sigma
    \nonumber
    \\
    &\geq \int^t_0\int_\Omega\alpha(\bm{u}(\sigma))\gamma({B}({\bm{u}}(\sigma))\nabla{\bm{u}}(\sigma))\,dxd\sigma.\label{conv_10}
  \end{align}
  Let $\{\bm{\psi}_n\}_{n\in\N}$ be the sequence as in Lemma \ref{lem003} corresponding to the case $\bm{\psi}=\bm{u}$. Then, substituting $\bm{\psi}_n$ for $\bm{\omega}$ in \eqref{conv_8}, we see that
  \begin{align}
    &\olim_{n \to \infty}\left(+\int^t_0\int_\Omega\alpha(\overline{\bm{u}}_n(\sigma))\gamma_{\varepsilon_n}({B}(\overline{\bm{u}}_n(\sigma))\nabla\overline{\bm{u}}_n(\sigma))\,dxd\sigma\right.
    \nonumber
    \\
    &\left.+\nu_n\int_0^t\int_\Omega\Upsilon_p(\nabla\overline{\bm{u}}_n(\sigma))\,dxd\sigma+\int_0^t|\nabla \bm{u}_n(\sigma)|_{[H]^{M\times N}}^2\,d\sigma
    \nonumber
    \right)
  \\
    &\leq \lim_{n\rightarrow\infty}\nu_nC_\Upsilon\Big(\int_0^t|\nabla\bm{\psi}_n(\sigma)|^p_{L^p(\Omega)}\,d\sigma+|\Omega|T\Big)
    \nonumber
    \\
    &+\lim_{n\rightarrow\infty}\int_0^t(\mu_n\nabla\partial_t\bm{u}_n(\sigma),\nabla(\overline{\bm{u}}_n-{\bm{\psi}}_n)(\sigma))_{[H]^{M\times N}}\,d\sigma 
    \nonumber
    \\
    &-\lim_{n\rightarrow\infty}\int_0^t(A(\underline{\bm{u}}_n(\sigma))\partial_t\bm{u}_n(\sigma)+\nabla_{\bm{u}} G(x,\overline{\bm{u}}_n(\sigma)),(\overline{\bm{u}}_n-\bm{\psi}_n)(\sigma))_{[H]^M}\,d\sigma
    \nonumber
    \\
    &-\lim_{n\rightarrow\infty}\int^t_0([\nabla\alpha](\underline{\bm{u}}_n(\sigma))\gamma_{\varepsilon_n}(B(\underline{\bm{u}}_n(\sigma))\nabla\underline{\bm{u}}_n(\sigma)),(\overline{\bm{u}}_n-\bm{\psi}_n)(\sigma))_{[H]^M}\,d\sigma\nonumber
    \\
    &+\lim_{n\rightarrow\infty}\int_0^t|\nabla \bm{\psi}_n(\sigma)|_{[H]^{M\times N}}^2\,d\sigma-\lim_{n\rightarrow\infty}\int^t_0(\alpha(\underline{\bm{u}}_n(\sigma))\cdot
    \label{conv_12}
    \\
    &\quad\cdot \nabla\gamma_{\varepsilon_n}({B}_0(\underline{\bm{u}}_n(\sigma))\nabla\underline{\bm{u}}_n(\sigma){B}_1(\underline{\bm{u}}_n(\sigma))):\big([\nabla B_0](\underline{\bm{u}}_n)\nabla\underline{\bm{u}}_n(\sigma) B_1(\underline{\bm{u}}_n) 
    \nonumber
    \\
    &\qquad+ B_0(\underline{\bm{u}}_n)\nabla\underline{\bm{u}}_n(\sigma) [\nabla B_1](\underline{\bm{u}}_n)\big),(\overline{\bm{u}}_n-\bm{\psi}_n)(\sigma))_{[H]^M}d\sigma
    \nonumber
    \end{align}
  \begin{align}
    &=0+\int^t_0\int_\Omega\alpha({\bm{u}}(\sigma))\gamma({B}({\bm{u}}(\sigma)\nabla{\bm{u}}(\sigma)))\,dxd\sigma+\int_0^t|\nabla \bm{u}(\sigma)|_{[H]^{M\times N}}^2\,d\sigma.
    \nonumber
  \end{align}
  From (Fact 1) and \eqref{conv_9}--\eqref{conv_12}, we obtain the following convergences as $n\rightarrow\infty$:
  \begin{align}
    &\nu_n\int_0^t\int_\Omega\Upsilon_p(\nabla\overline{\bm{u}}_n(\sigma))\,dxd\sigma\rightarrow0,\label{conv_13}
    \\
    &\int_0^t|\nabla\overline{\bm{u}}_n(\sigma)|^2_{[H]^{M\times N}}\,d\sigma\rightarrow\int_0^t|\nabla\bm{u}(\sigma)|^2_{[H]^{M\times N}}\,d\sigma.\label{conv_15}
  \end{align}
  Additionally, thanks to \eqref{gamma_ep}, \eqref{conv_15}, (B-4), and uniform convexity of $L^2$-based topologies, we can derive that, for a.e. $t\in[0,T]$, as $n\rightarrow\infty$:
  \begin{align}
    &\bullet\overline{\bm{u}}_n(t)\rightarrow \bm{u}(t)\mbox{ and }\underline{\bm{u}}_n(t)\rightarrow \bm{u}(t)\mbox{ in }[V]^M,\label{conv_16}
    \\[1.0ex]
    &\bullet|\alpha(\overline{\bm{u}}_n(t))\gamma_{\varepsilon_n}({B}(\overline{\bm{u}}_n(t))\nabla\overline{\bm{u}}_n(t))-\alpha({\bm{u}}(t))\gamma({B}({\bm{u}}(t))\nabla{\bm{u}}(t))|_{L^1(\Omega)}
\rightarrow0,\label{conv_17}
  \end{align}
  \eqref{conv_5}--\eqref{conv_ene05}, \eqref{conv_16} and \eqref{conv_17} imply 
  \begin{gather}\label{convJ}
     \begin{cases}
         E_{\nu_n,\varepsilon_n}(\overline{\bm{u}}_n(t)) \to \mathcal{J}_*(t) = E(\bm{u}(t)),
         \\
         E_{\nu_n,\varepsilon_n}(\underline{\bm{u}}_n(t)) \to \mathcal{J}_*(t) = E(\bm{u}(t)), 
     \end{cases}
     \mbox{a.e. $ t \in (0, T) $, as $ n \to \infty $.}
 \end{gather}
 Now, we take $\omega=\bm{\varphi}$ in $[V]^M$ in \eqref{conv_8}, and consider to pass to the limit $n\rightarrow\infty$. Then, in light of \eqref{conv_111}, \eqref{conv_4}, \eqref{conv_13} and \eqref{conv_15}, and the Lebesgue Bochner dominated convergence theorem, we obtain
    \begin{gather}
    \int_I(A(\bm{u}(t))\partial_t\bm{u}(t),\bm{u}(t)-\bm{\varphi})_{[H]^M}\,dt+\kappa\int_I(\nabla\bm{u}(t),\nabla(\bm{u}(t)-\bm{\varphi}))_{[H]^{M\times N}}\,dt
    \nonumber
    \\
    +\int_I(\nabla_{\bm{u}} G(x,\bm{u}(t))+[\nabla\alpha](\bm{u}(t))\gamma({B}(\bm{u}(t))\nabla\bm{u}(t)),\bm{u}(t)-\bm{\varphi})_{[H]^M}\,dt
    \nonumber
    \\
    +\int_I(\alpha(\bm{u}(t))\bm{w}^*(t):[\nabla{B}](\bm{u}(t))\nabla\bm{u}(t),\bm{u}(t)-\bm{\varphi})_{[H]^M}\,dt\label{conv_18}
    \\
    +\int_I\int_{\Omega}\alpha(\bm{u}(t))\gamma(B(\bm{u}(t))\nabla \bm{u}(t))\,dx\,dt\leq\int_I\int_{\Omega}\alpha(\bm{u}(t))\gamma (B(\bm{u}(t))\nabla\bm{\varphi})\,dx\,dt,
    \nonumber
    \end{gather}
    for any open interval $I\subset(0,T)$. Moreover, by involving \eqref{subdig1}, \eqref{subdig2}, \eqref{conv_2}, together with Example \ref{ex2}, and (Fact 3) in Remark \ref{rem2}, one can conclude that 
    \begin{equation*}
     \bm{w}^*\in\partial\widehat{\Phi}_0^I(B(\bm{u})\nabla \bm{u})\mbox{ in }L^2(I;[H]^{M\times N}),
    \end{equation*}
 and hence, 
 \begin{equation}\label{subdig3}
  \begin{aligned}
    &\bm{w}^*\in\partial\Phi_0(B(\bm{u})\nabla \bm{u}) \mbox{ in } [H]^{M\times N}, \mbox{ for a.e. }t\in(0,T),\mbox{ and }
    \\
    &\bm{w}^*\in\partial\gamma(B(\bm{u})\nabla \bm{u}) \mbox{ in }\R^{M\times N}, \mbox{ a.e. in } Q.
  \end{aligned}
 \end{equation}
\eqref{conv_18} and \eqref{subdig3} imply that the limit $\bm{u}$ satisfies (S1).

Next, we verify (S2). From \eqref{f-ene0}, it is derived that 
\begin{align}
      &\int_{t_{i-1}}^{t_i}\biggl(\frac{C_A}{4}|\partial_t\bm{u}_n(\sigma)|^2_{[H]^M}+\frac{\mu_n}{2}|\nabla\partial_t\bm{u}_n(\sigma)|^2_{[H]^{M\times N}}\biggr)\,d\sigma+\mathscr{E}_{\nu_n,\varepsilon_n}(\overline{\bm{u}}_n(t))
      \nonumber
      \\
      &\quad\leq \mathscr{E}_{\nu_n,\varepsilon_n}(\underline{\bm{u}}_n(t)),\mbox{ for }t\in[t_{i-1},t_i),~i=1,2,\dots,{\textstyle\frac{T}{\tau_n}},\mbox{ and }n\in\N.
      \label{energy1}
\end{align}
Here, setting $m^s:=[\frac{s}{\tau}]$ and $m_t:=\bigl([\frac{t}{\tau}]+1\bigr)\wedge \frac{T}{\tau}$ for $0\leq s< t\leq T$, and summing both sides of \eqref{energy1} for $i=m^s+1, m^s+2,\dots,m_t$, we obtain that
\begin{align}
      &\frac{C_A}{4}\int_s^t |\partial_t\bm{u}_n(\sigma)|^2_{[H]^M}\,d\sigma+\mathscr{E}_{\nu_n,\varepsilon_n}(\overline{\bm{u}}_n(t))
      \nonumber
      \\
      &\leq \frac{C_A}{4}\int_{m^s\tau_n}^{m_t\tau_n} \left(|\partial_t\bm{u}_n(\sigma)|^2_{[H]^M}+2\mu_n|\nabla\partial_t\bm{u}_n(\sigma)|^2_{[H]^{M\times N}}\right)\,d\sigma+\mathscr{E}_{\nu_n,\varepsilon_n}(\overline{\bm{u}}_n(t))
      \nonumber
      \\
      &\leq \mathscr{E}_{\nu_n,\varepsilon_n}(\underline{\bm{u}}_n(s)), \mbox{ for }s,t\in[0,T];s\leq t,\mbox{ and }n\in\N.\label{energy3}
    \end{align}
    Now, taking the limit $n\rightarrow\infty$ and using \eqref{conv_111} \eqref{conv_2}, \eqref{convJ} and \eqref{energy3}, we see that 
\begin{gather}
    \frac{C_A}{4}\int_{s}^{t}|\partial_t\bm{u}(\sigma)|^2_{[H]^M}\,d\sigma+\mathscr{E}(\bm{u}(t)) \leq \mathscr{E}(\bm{u}(s)),
    \label{energy4}
    \\
    \mbox{ for a.e. $ s \in [0, T) $ including $s = 0$, and a.e. $ t \in (s, T) $.}
    \nonumber
\end{gather}
In addition, the condition ``a.e. $ t \in (s, T) $'' in \eqref{energy4} can be strengthened to ``for any $ t \in [s, T] $''. Indeed, by taking a sequence $\{t_n\}_{n\in\N}\subset (t,T)$ with $t_n \rightarrow t \in [s, T]$, we observe that 
\begin{gather}
  \frac{C_A}{4}\int_{s}^{t_n}|\partial_t\bm{u}(\sigma)|^2_{[H]^M}\,d\sigma+\mathscr{E}(\bm{u}(t_n)) \leq \mathscr{E}(\bm{u}(s)),\mbox{ for all $ n\in\N $.}
  \label{energy5}
\end{gather}
Having in mind the lower semi-continuity of $\mathscr{E}$ on $[H]^M$ and (S0), we conclude (S2) by verifying that \eqref{ene-inq1} holds for a.e. $s\in (0,T)$ including $s=0$, and any $t\in[s,T]$. 

Thus, we complete the proof of Theorem \ref{mainThm1}. $\square$

\begin{remark}
  For the parabolic system associated with a regularized energy $\mathscr{E}_{\nu}:\bm{u}\in[W^{1,p}(\Omega)]^M \mapsto \mathscr{E}_\nu(\bm{u}):=\mathscr{E}(\bm{u})+\nu\int_\Omega \Upsilon_p(\nabla \bm{u})\,dx$, with $\nu>0$ and $p\in(2,\infty)\cap[N,\infty)$, we obtain results analogous to Theorem \ref{mainThm1}. Moreover, when $A=I$ (identity) and $\gamma \in C^{1,1}(\mathbb{R}^{M \times N})$, the results of uniqueness and continuous dependence of solutions follow from arguments similar to those in \cite{arAMSU2025}. 

\end{remark}

\noindent
{\bf{Acknowledgements.}} This work is supported by JST SPRING Grant Number JP-
MJSP2109.

\end{document}